\pdfoutput=1
\documentclass[11pt]{article}

\usepackage[a4paper,margin=1in]{geometry}
\usepackage{amsfonts,amssymb}
\usepackage{amsthm}
\usepackage{graphicx}
\usepackage{algorithm}
\usepackage{mathtools}
\usepackage{booktabs}
\usepackage{array}
\usepackage{float}
\usepackage{algpseudocode}
\usepackage[utf8]{inputenc}
\usepackage[numbers,sort&compress]{natbib}
\usepackage{enumitem}
\usepackage{lmodern}
\usepackage{microtype}
\usepackage{multirow}
\usepackage{subcaption}
\usepackage[section]{placeins}
\usepackage[colorlinks=true,linkcolor=blue,citecolor=blue,urlcolor=blue]{hyperref}
\usepackage[capitalize,noabbrev]{cleveref}

\ifpdf
  \DeclareGraphicsExtensions{.pdf,.png,.jpg,.eps}
\else
  \DeclareGraphicsExtensions{.eps}
\fi

\graphicspath{{figures/exp1/}{figures/exp2/}{figures/exp3/}{figures/}}

\newtheorem{theorem}{Theorem}[section]
\newtheorem{lemma}[theorem]{Lemma}
\newtheorem{proposition}[theorem]{Proposition}
\newtheorem{corollary}[theorem]{Corollary}
\newtheorem{assumption}[theorem]{Assumption}
\crefname{assumption}{Assumption}{Assumptions}
\Crefname{assumption}{Assumption}{Assumptions}
\theoremstyle{remark}
\newtheorem{remark}[theorem]{Remark}
\theoremstyle{definition}
\crefname{remark}{Remark}{Remarks}
\Crefname{remark}{Remark}{Remarks}

\newtheorem{definition}[theorem]{Definition}
\crefname{example}{Example}{Examples}
\Crefname{example}{Example}{Examples}

\allowdisplaybreaks

\newcommand{\email}[1]{\texttt{#1}}
\newenvironment{keywords}{\par\smallskip\noindent\textbf{Keywords.} }{\par}
\newenvironment{AMS}{\par\smallskip\noindent\textbf{AMS subject classifications.} }{\par}
\newcommand{\R}{\mathbb{R}}
\newcommand{\ip}[2]{\left\langle #1,#2 \right\rangle}
\newcommand{\norm}[1]{\left\lVert #1 \right\rVert}
\newcommand{\abs}[1]{\left\lvert #1 \right\rvert}
\newcommand{\normM}[2]{\left\lVert #1 \right\rVert_{#2}}
\newcommand{\pred}{\operatorname{pred}}
\newcommand{\ared}{\operatorname{ared}}

\newcommand{\lammin}{\lambda_{\min}}
\newcommand{\lammax}{\lambda_{\max}}
\newcommand{\diag}{\operatorname{diag}}
\newcommand{\cond}{\operatorname{cond}}

\newcommand{\vol}{\operatorname{vol}}
\newcommand{\cO}{\mathcal{O}}
\newcommand{\MATRO}{\textup{MATRO}}
\newcommand{\MetSel}{\textup{\textsc{MetricSelection}}}

\newcommand{\Mclass}{\mathcal{M}}
\newcommand{\Cclass}{\mathcal{C}}
\newcommand{\Bball}{\mathcal{B}}

\newcommand{\omunit}{\omega_n}

\title{\MATRO{}: Metric-Aware Trust-Region Optimization\\with Fully Quadratic Models}

\author{
  Wei Hu\thanks{
    LSEC, ICMSEC, Academy of Mathematics and Systems Science,
    Chinese Academy of Sciences, Beijing 100190, China;
    and University of Chinese Academy of Sciences, Beijing 100049, China
    (\email{huwei@amss.ac.cn}). Corresponding author.}
  \and
  Pengcheng Xie\thanks{
    Lawrence Berkeley National Laboratory,
    Berkeley, CA 94720, USA
    (\email{pxie98@gmail.com}).}
  \and
  Ya-Xiang Yuan\thanks{
    LSEC, ICMSEC, Academy of Mathematics and Systems Science,
    Chinese Academy of Sciences, Beijing 100190, China
    (\email{yyx@lsec.cc.ac.cn}).}
  \and
  Li Zhang\thanks{
    LSEC, ICMSEC, Academy of Mathematics and Systems Science,
    Chinese Academy of Sciences, Beijing 100190, China
    (\email{zhangli2022@lsec.cc.ac.cn}).}
}

\ifpdf
\hypersetup{
  pdftitle={MATRO: Metric-Aware Trust-Region Optimization with Fully Quadratic Models},
  pdfauthor={W. Hu, P. Xie, Y. Yuan, and L. Zhang}
}
\fi

\begin{document}
\maketitle

\begin{abstract}
Model-based derivative-free trust-region methods build local interpolation models and
restrict trial steps to regions where those models are reliable.  This paper studies
the shape of that region.  When an objective is poorly scaled or locally anisotropic,
a Euclidean ball can be governed by the steepest local direction and can restrict
progress along directions of slow variation.  We propose \MATRO{} (Metric-Aware
Trust-Region Optimization), a fully quadratic interpolation framework in which the
trust region is the ellipsoid \(s^\top M_k s\le \Delta_k^2\).  For any positive
definite metric \(M_k\), the induced variable \(y=M_k^{1/2}s\) converts the
ellipsoidal subproblem into a standard Euclidean trust-region subproblem, so model
decrease, ratio tests, radius updates, poisedness, and fully quadratic error bounds
can be stated in induced coordinates under a uniform metric contract.  The metric is
selected from the interpolation Hessian: positive definite quadratics yield a unique
volume-normalized curvature metric that isotropizes the induced Hessian and gives a
truncated Newton step, while indefinite fitted Hessians motivate an absolute-curvature
metric that balances curvature magnitudes without changing curvature signs.  Under the
standard fully quadratic assumptions and the metric contract, \MATRO{} retains the
first-order evaluation-complexity order \(\cO(n^2\varepsilon^{-2})\).  Experiments on
Mor\'e--Wild benchmarks, controlled anisotropy tests, and two-dimensional trajectories
show that curvature-shaped regions are most effective when the interpolation Hessian
captures stable local anisotropy, while dense linear algebra is most visible at loose
accuracies or on inexpensive analytic tests.
\end{abstract}

\begin{keywords}
derivative-free optimization, trust-region methods, fully quadratic models,
ellipsoidal trust regions, metric selection, evaluation complexity
\end{keywords}

\begin{AMS}
65K05, 90C56, 90C30
\end{AMS}

\section{Introduction}
\label{sec:intro}

Derivative-free optimization (DFO) addresses problems in which function values are
available but reliable derivatives are not.  Such problems arise when the objective is
produced by a simulation, experiment, or legacy code, and a single evaluation may be
far more expensive than the algebra performed by the optimizer.  Model-based
trust-region methods form one of the main deterministic approaches in this setting:
they construct a local surrogate from sampled values, minimize the surrogate in a
neighborhood of the current point, and accept the trial point according to the agreement
between actual and predicted reduction \cite{ConnScheinbergVicente2009,LarsonMenickellyWild2019}.

The standard analysis of interpolation-based DFO is built around two ingredients: a
local model that is accurate enough near the current point and a sampling set with
adequate geometry.  Fully linear and fully quadratic models, together with poised
interpolation sets, make this principle precise.  The present paper studies a related
geometric choice that is often fixed before the analysis begins: the shape of the region
where the model is trusted.  Most interpolation-based trust-region solvers use a
Euclidean ball in the original variables.  This convention is stable and simple to manage,
but it represents all directions by a single scalar radius.

A single scalar radius can be restrictive in poorly scaled or locally anisotropic regions.
A narrow curved valley gives the basic example.  Across the valley the objective changes
rapidly, so a reliable model may require short transverse steps.  Along the valley floor
the objective changes more slowly, and longer steps may be beneficial.  A
spherical trust region must compromise between these two scales.  If the radius is
chosen for the transverse direction, movement along the valley floor may be needlessly
slow; if it is chosen for the tangential direction, the trial step may leave the region in
which the model is reliable across the valley.  This is not simply a failure of variable
preprocessing.  The relevant directions can rotate during the run, and in a black-box
setting they must be inferred from the function values already collected.

\begin{figure}[t]
    \centering
    \includegraphics[width=.45\textwidth]{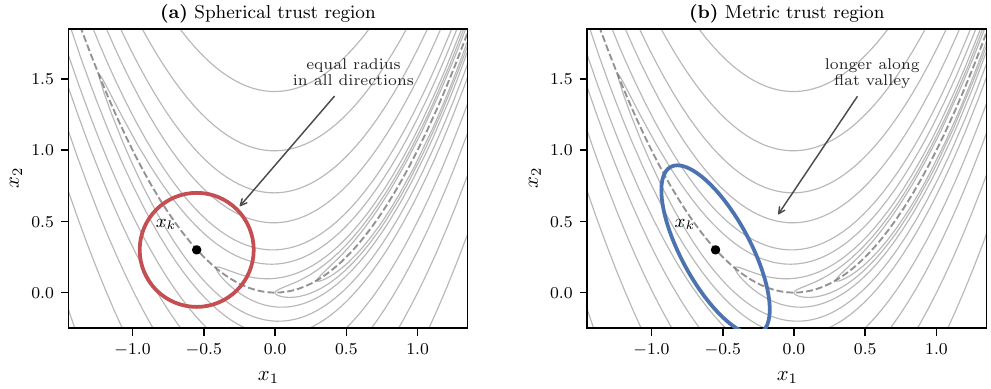}
    \caption{Spherical and metric trust regions on Rosenbrock-type contours.  A ball
    in the original variables uses the same length scale in every direction, whereas an
    ellipsoid can be aligned with a local valley.  The figure is illustrative: \MATRO{}
    constructs the metric from interpolation curvature, not from exact derivative
    information.}
    \label{fig:motivation}
\end{figure}

Changing the shape of the trust region is equivalent to changing the metric in the
trust-region subproblem.  For a symmetric positive definite matrix \(M_k\), define
\begin{equation}
    \Bball_{M_k}(x_k,\Delta_k)
    :=\{x_k+s:s^\top M_k s\le \Delta_k^2\}.
    \label{eq:intro-M-ball}
\end{equation}
The scalar \(\Delta_k\) controls the size of the region, while \(M_k\) controls its shape
and orientation.  With the induced variable
\begin{equation}
    y=M_k^{1/2}s,
    \label{eq:intro-induced}
\end{equation}
the ellipsoid \eqref{eq:intro-M-ball} becomes the Euclidean ball \(\norm{y}\le
\Delta_k\).  Thus an ellipsoidal subproblem in the original variables is an ordinary
Euclidean trust-region subproblem after a linear change of coordinates.  This simple
observation is the first organizing principle of the paper: the trust-region mechanism
can be analyzed for an arbitrary bounded metric before any particular metric-selection
rule is specified.

The second organizing principle is that a fully quadratic DFO method already computes
a Hessian matrix.  Classical full quadratic solvers use this matrix in the quadratic
objective but usually leave the trusted region spherical in the original variables.  We ask
whether the same fitted curvature can also define the shape of the region over which the
model is trusted.  This question is meaningful only if the metric is stable: the fitted
Hessian may be inaccurate in early iterations, nearly singular, or indefinite.  The role of
the metric is therefore not to convexify the model.  The signed Hessian remains in the
trust-region subproblem, and the metric supplies a positive definite length scale for the
step.

The ideal positive definite case explains the construction.  For the quadratic
\begin{equation}
    f(x)=\frac12 x^\top Hx,
    \qquad H=\diag(1,\kappa),
    \qquad \kappa\gg1,
    \label{eq:intro-two-d-quad}
\end{equation}
the Euclidean metric leaves the induced Hessian with condition number \(\kappa\).  The
volume-normalized curvature metric
\begin{equation}
    M_\star=\frac{H}{\det(H)^{1/2}}
    =\diag(\kappa^{-1/2},\kappa^{1/2})
    \label{eq:intro-Mstar}
\end{equation}
gives
\begin{equation}
    M_\star^{-1/2}HM_\star^{-1/2}=\kappa^{1/2}I.
    \label{eq:intro-induced-isotropic}
\end{equation}
Thus the quadratic is isotropic in the variables used by the subproblem.  In the
original variables, the trust region expands along the flat direction and contracts along
the steep direction.  If a condition-number cap \(\cond(M)\le K\) is imposed for
stability, the best commuting metric in this two-dimensional example can remove only a
factor \(K\) of the original condition number.  \Cref{tab:intro-two-d} summarizes this
calculation.

\begin{table}[t]
    \centering
    \caption{The quadratic model
    \(f(x)=\frac12 x^\top\diag(1,\kappa)x\).  The induced condition number is that of
    \(M^{-1/2}HM^{-1/2}\).}
    \label{tab:intro-two-d}
    \small
    \begin{tabular}{@{}p{.30\textwidth}p{.25\textwidth}p{.32\textwidth}@{}}
    \toprule
    Metric \(M\) & Induced condition number & Semi-axis lengths \\
    \midrule
    \(I\) & \(\kappa\) & \(\Delta,\Delta\) \\
    \(\diag(\kappa^{-1/2},\kappa^{1/2})\)
        & \(1\) & \(\Delta\kappa^{1/4},\Delta\kappa^{-1/4}\) \\
    \(\diag(K^{-1/2},K^{1/2})\), \(K\le\kappa\)
        & \(\kappa/K\) & \(\Delta K^{1/4},\Delta K^{-1/4}\) \\
    \bottomrule
    \end{tabular}
\end{table}

The paper develops \MATRO{} (Metric-Aware Trust-Region Optimization) around these
ideas.  The first part establishes the metric trust-region framework: metric balls,
induced-coordinate subproblems, poisedness in metric balls, and a metric contract that
keeps the analysis uniform.  The second part constructs metrics from interpolation
curvature.  For positive definite quadratics, the determinant-normalized Hessian is the
unique volume-normalized metric that makes the induced Hessian isotropic; moreover,
the corresponding metric trust-region step is a truncated Newton step.  For indefinite
Hessians, an absolute-curvature metric balances curvature magnitudes while preserving
the signs of the model curvature.  Stabilized spectral variants satisfy the metric contract
needed for the standard fully quadratic convergence proof.  The numerical experiments
then examine the expected cost--accuracy trade-off: curvature-shaped regions can help
when the fitted Hessian captures persistent anisotropy, but they require full quadratic
models and dense linear algebra.

\subsection{Related work}
\label{subsec:related}
The algorithmic setting is the model-based DFO trust-region framework.  The monograph
of Conn, Scheinberg, and Vicente \cite{ConnScheinbergVicente2009} develops fully
linear and fully quadratic model theory, criticality steps, geometry-improvement
mechanisms, and evaluation-complexity analysis for interpolation-based DFO.  The
classical derivative-based trust-region background is given by Conn, Gould, and Toint
\cite{ConnGouldToint2000}, and the geometry of interpolation and regression sets is
analyzed in \cite{ConnScheinbergVicente2008a,ConnScheinbergVicente2008b}.  Powell's
solvers are central practical references: UOBYQA uses full quadratic interpolation
\cite{Powell2002UOBYQA}, whereas NEWUOA and BOBYQA reduce the number of points
through minimum-change Hessian updates \cite{Powell2006NEWUOA,Powell2009BOBYQA}.
PRIMA provides a modern reference implementation of Powell's methods
\cite{Zhang2023PRIMA}.

A recent line of work has emphasized that the model-construction rule itself is a
mathematical design choice.  Examples include underdetermined models derived from
trust-region iteration properties \cite{XieYuan2025SIAM,LiZhouXieLi2025,YeLiXieYu2025,XieYuan2023}, least \(H^2\)-norm updating
\cite{XieYuan2026H2}, regional minimal updating \cite{XieWild2026ReMU},
transformed-objective DFO \cite{XieYuan2025DFOTO}, and two-dimensional model-based
subspace methods \cite{XieYuan2026MoSub}.  These works modify the norm, update
criterion, transformation, or subspace used to construct a local model.  \MATRO{} is
complementary to this direction.  It keeps the model fully quadratic and treats a
different object as design variable: the metric defining where the model is trusted.

Scaling and preconditioning are classical in smooth trust-region methods
\cite{ConnGouldToint2000}.  Their derivative-free use is subtler because the relevant
local geometry is not normally available from exact derivatives.  A fixed change of
variables can remove known parameter scales, but it does not generally follow a local
curvature geometry that rotates or changes during the run.  Other DFO algorithms adapt
geometry in different ways: MADS adapts polling directions in a direct-search framework
\cite{AudetDennis2006MADS}, RBF trust-region methods build nonpolynomial local
surrogates \cite{WildShoemaker2011}, and CMA-ES adapts a covariance matrix in a
population-based stochastic search \cite{HansenMullerKoumoutsakos2003}.  The present
work remains within deterministic interpolation-based trust regions and asks how the
curvature already present in a fully quadratic model can be converted into a stable
ellipsoidal trust-region shape.  Three features distinguish \MATRO{} from
Hessian-scaled trust regions in the smooth setting: (i)~the metric is derived from
an \emph{interpolation} Hessian that may be inaccurate or indefinite, not from an
exact gradient or Hessian; (ii)~the metric defines only the trust-region geometry,
while the signed Hessian is retained in the subproblem objective---the model is not
convexified; and (iii)~the metric changes at every iteration and must satisfy a
uniform spectral contract to preserve the fully quadratic convergence machinery.

\subsection{Notation}
\label{subsec:notation}
Throughout the paper, \(\R^n\) is equipped with the standard inner product
\(\ip{u}{v}=u^\top v\) and Euclidean norm \(\norm{u}\).  For \(M\succ0\), define
\(\normM{u}{M}=(u^\top Mu)^{1/2}\), and let \(M^{1/2}\) denote the SPD square root.
For a symmetric matrix \(H\), define
\begin{equation}
    \normM{H}{M}:=\norm{M^{-1/2}HM^{-1/2}}_2.
    \label{eq:HnormM}
\end{equation}
If \(H\succ0\), write
\begin{equation}
    \kappa_M(H):=\cond(M^{-1/2}HM^{-1/2}).
    \label{eq:kappaM-def}
\end{equation}
The condition number is \(\cond(M)=\lammax(M)/\lammin(M)\), and \(A\preceq B\)
means that \(B-A\) is positive semidefinite.  At iteration \(k\), \(x_k\) is the current
point, \(\Delta_k\) is the trust-region radius, \(M_k\) is the metric, and
\(T_k=M_k^{1/2}\).  In induced coordinates \(y=T_ks\), the model gradient and Hessian
are denoted by \(g_{y,k}\) and \(B_{y,k}\).  We use
\begin{equation}
    \chi_k=\norm{g_{y,k}},
    \qquad
    \chi_k^{\rm true}=\norm{M_k^{-1/2}\nabla f(x_k)}
    \label{eq:chi-def-intro}
\end{equation}
for the model and true metric-dual stationarity measures.

\section{Metric Trust-Region Framework}
\label{sec:framework}

The metric will later be selected from interpolation curvature, but the basic
trust-region mechanism does not depend on that choice.  This section fixes an arbitrary
positive definite metric and rewrites the usual trust-region ingredients in the induced
coordinates where the metric ball is Euclidean.  The identities below are elementary,
but they are the mechanism that allows the later analysis to separate trust-region
globalization from metric selection.

\subsection{Metric balls and volume}
\label{subsec:metric-balls}
Given \(x\in\R^n\), \(\Delta>0\), and \(M\succ0\), define the metric ball
\begin{equation}
    \Bball_M(x,\Delta)
    :=\{x+s:\normM{s}{M}\le \Delta\}.
    \label{eq:M-ball-def}
\end{equation}
If \(M=Q\diag(\mu_1,\ldots,\mu_n)Q^\top\), with \(Q\) orthogonal and \(\mu_i>0\),
then \(\Bball_M(x,\Delta)\) is an ellipsoid with principal directions given by the
columns of \(Q\) and semi-axis lengths \(\Delta/\sqrt{\mu_i}\).  Its volume is
\begin{equation}
    \vol\bigl(\Bball_M(x,\Delta)\bigr)
    = \omunit\Delta^n\det(M)^{-1/2},
    \label{eq:M-ball-volume}
\end{equation}
where \(\omunit\) is the volume of the Euclidean unit ball in \(\R^n\).

\begin{proposition}[Axes and volume of a metric ball]
\label{prop:axes-volume}
Let \(M\succ0\) have the spectral decomposition
\(M=Q\diag(\mu_1,\ldots,\mu_n)Q^\top\).
Then \(\Bball_M(x,\Delta)\) has semi-axis lengths
\(\Delta/\sqrt{\mu_i}\) along~\(Qe_i\), and its volume is
given by \eqref{eq:M-ball-volume}.  If \(\det(M)=1\),
then \(\Bball_M(x,\Delta)\) and \(B(x,\Delta)\) have the
same volume; the matrix~\(M\) changes the shape but not
the volume scale.
\end{proposition}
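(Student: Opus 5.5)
The plan is to reduce everything to the Euclidean unit ball through a single explicit linear change of variables, and then read off both the axes and the volume from it. Write \(M=Q D Q^\top\) with \(D=\diag(\mu_1,\ldots,\mu_n)\), so that \(M^{1/2}=QD^{1/2}Q^\top\). Substitute \(s=Q z\) with \(z\) in the rotated coordinates. Then
\[
\normM{s}{M}^2=z^\top D z=\sum_{i=1}^n \mu_i z_i^2 .
\]
Hence \(\Bball_M(x,\Delta)-x\) is the image under the orthogonal map \(Q\) of the axis-aligned ellipsoid \(\{z:\sum_i \mu_i z_i^2\le\Delta^2\}\). That ellipsoid has semi-axis \(\Delta/\sqrt{\mu_i}\) along \(e_i\): setting \(z=te_i\) gives the boundary point \(t=\Delta/\sqrt{\mu_i}\). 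Applying \(Q\) sends \(e_i\) to \(Qe_i\) and preserves lengths, which gives the axis statement.

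For the volume, I will use the induced variable \eqref{eq:intro-induced} directly. The map \(y=M^{1/2}s\) is a linear bijection that carries \(\Bball_M(x,\Delta)-x\) onto the Euclidean ball \(\{\norm{y}\le\Delta\}\), because \(\normM{s}{M}=\norm{M^{1/2}s}\). Equivalently,
\[
\Bball_M(x,\Delta)=x+M^{-1/2}B(0,\Delta).
\]
Translation invariance of Lebesgue measure and the change-of-variables formula for linear maps then give
\[
\vol\bigl(\Bball_M(x,\Delta)\bigr)=\abs{\det M^{-1/2}}\,\vol(B(0,\Delta)).
\]
Scaling gives \(\vol(B(0,\Delta))=\omunit\Delta^n\), and \(\det M^{-1/2}=\prod_i\mu_i^{-1/2}=\det(M)^{-1/2}>0\). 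This is exactly \eqref{eq:M-ball-volume}.

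The final claim is immediate. If \(\det M=1\), the factor \(\det(M)^{-1/2}\) equals \(1\), so the volume is \(\omunit\Delta^n=\vol(B(x,\Delta))\). Meanwhile the semi-axes \(\Delta/\sqrt{\mu_i}\) may differ from \(\Delta\), subject only to \(\prod_i \mu_i=1\); this is the precise sense in which \(M\) changes the shape but not the volume scale.

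No step is a real obstacle. The only point needing care is to state the semi-axis claim precisely: the semi-axes are the extremal distances from \(x\) in the directions \(Qe_i\), and they are identified through the diagonal form in the rotated coordinates \(z\).
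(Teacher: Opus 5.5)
Your proposal is correct and follows the paper's proof essentially step for step: the rotation \(s=Qz\) reduces the ellipsoid to \(\sum_i\mu_i z_i^2\le\Delta^2\), from which the semi-axes \(\Delta/\sqrt{\mu_i}\) along \(Qe_i\) are read off, and the induced map \(y=M^{1/2}s\), with Jacobian determinant \(\det(M)^{1/2}\), gives the volume \(\omunit\Delta^n\det(M)^{-1/2}\). The main difference is that you spell out the translation invariance and write the ball as \(x+M^{-1/2}B(0,\Delta)\), which the paper leaves implicit.
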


\begin{proof}
Write \(s=Qz\).  Then \(s^\top Ms=\sum_i \mu_i z_i^2\).  The boundary condition
\(s^\top Ms=\Delta^2\) gives the semi-axis lengths \(\Delta/\sqrt{\mu_i}\).  The change
of variables \(y=M^{1/2}s\) maps the ellipsoid onto the Euclidean ball \(\norm{y}\le
\Delta\) and has Jacobian determinant \(\det(M)^{1/2}\).  Hence
\(\vol(\Bball_M)=\det(M)^{-1/2}\omunit\Delta^n\).
\end{proof}

\Cref{prop:axes-volume} explains the determinant normalization used in Section~\ref{sec:metric}.
The trust-region radius already controls the overall size of the trusted region.  The
metric is used to distribute this size among directions.  Setting \(\det(M)=1\) is a
convenient way to keep this separation explicit.

\subsection{Subproblems in induced variables}
\label{subsec:affine-equivalence}
Let \(M\succ0\), \(T=M^{1/2}\), and \(y=Ts\).  The model in the original step variable
is
\begin{equation}
    m_x(s)=f(x)+g_x^\top s + \frac12 s^\top H_x s.
    \label{eq:model-x}
\end{equation}
Define the induced model \(\widetilde m\) by
\begin{equation}
    \widetilde m(y)=m_x(T^{-1}y).
    \label{eq:model-y-def}
\end{equation}

\begin{theorem}[Equivalence of metric and Euclidean subproblems]
\label{thm:affine-equiv}
Let \(M\succ0\), \(T=M^{1/2}\), and \(y=Ts\).  The two optimization problems
\begin{equation}
    \min_{\normM{s}{M}\le \Delta} m_x(s),
    \qquad
    \min_{\norm{y}\le \Delta} \widetilde m(y)
    \label{eq:two-subproblems}
\end{equation}
are equivalent under the change of variables \(y=Ts\) and have the same optimal value.  Moreover, predicted reduction is preserved:
\begin{equation}
    m_x(0)-m_x(s)=\widetilde m(0)-\widetilde m(y),
    \qquad y=Ts,
    \label{eq:pred-preserved}
\end{equation}
and the gradient and Hessian in induced variables are
\begin{equation}
    g_y=T^{-1}g_x,
    \qquad
    B_y=T^{-1}H_xT^{-1}.
    \label{eq:gy-By}
\end{equation}
\end{theorem}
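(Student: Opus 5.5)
The plan is to verify everything directly from the substitution \(s=T^{-1}y\), using only that \(T=M^{1/2}\) is symmetric positive definite and hence invertible. The approach has three steps: show that \(y\mapsto T^{-1}y\) is a bijection between the two feasible sets, show that the objectives agree along this bijection, and then read off the induced gradient and Hessian by expanding \(\widetilde m\).

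\emph{Feasible sets.} Because \(T\) is symmetric,
\[
\normM{s}{M}^2=s^\top Ms=s^\top T^\top Ts=\norm{Ts}^2 .
\]
Hence \(\normM{s}{M}\le\Delta\) holds if and only if \(\norm{y}\le\Delta\) with \(y=Ts\). Since \(T\) is invertible, the linear map \(s\mapsto Ts\) is a bijection from the metric ball \(\{s:\normM{s}{M}\le\Delta\}\) onto the Euclidean ball \(\{y:\norm{y}\le\Delta\}\); this is the same change of variables used in \Cref{prop:axes-volume}.

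\emph{Objectives, optimal values, and predicted reduction.} By definition \eqref{eq:model-y-def}, \(\widetilde m(Ts)=m_x(s)\) for every \(s\). Together with the bijection, this means that \(s^\ast\) minimizes the left problem in \eqref{eq:two-subproblems} exactly when \(Ts^\ast\) minimizes the right problem. The two problems therefore have the same optimal value, and attainment is guaranteed by compactness. For the predicted reduction, note that \(T^{-1}0=0\), so \(\widetilde m(0)=m_x(0)\). Subtracting the identity \(\widetilde m(y)=m_x(s)\) for \(y=Ts\) gives \eqref{eq:pred-preserved}.

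\emph{Induced gradient and Hessian.} Substitute \(s=T^{-1}y\) into \eqref{eq:model-x}:
\[
\widetilde m(y)=f(x)+(T^{-1}g_x)^\top y+\tfrac12\, y^\top \bigl(T^{-1}H_xT^{-1}\bigr) y .
\]
This step uses \(T^{-\top}=T^{-1}\), which holds because \(T\) is symmetric. The result is a quadratic in \(y\), and its gradient at \(0\) and its Hessian are read off directly, giving \eqref{eq:gy-By}. The matrix \(B_y\) is symmetric because \(H_x\) and \(T^{-1}\) are.

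None of these steps is a genuine obstacle. The only point that needs care is the repeated use of symmetry of \(T=M^{1/2}\), which justifies both \(s^\top Ms=\norm{Ts}^2\) and the transpose-free form of \(g_y\) and \(B_y\). With a non-symmetric factor, such as a Cholesky factor, the formulas would instead involve \(T^{-\top}\).
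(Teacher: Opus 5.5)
Your proposal is correct and follows the same route as the paper's proof: the identity $\normM{s}{M}^2=\norm{Ts}^2$ from the symmetry of $T=M^{1/2}$, direct substitution $s=T^{-1}y$ into the model, and reading off $g_y$ and $B_y$ from the resulting quadratic. You are somewhat more explicit than the paper about the bijection between the feasible sets and about attainment by compactness, but the argument is the same.
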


\begin{proof}
Since \(T\) is SPD, \(\normM{s}{M}^2=s^\top T^2s=\norm{Ts}^2=\norm{y}^2\).  Hence
feasibility is preserved by \(y=Ts\).  Substitution gives
\begin{equation}
    m_x(T^{-1}y)=f(x)+g_x^\top T^{-1}y+\frac12 y^\top T^{-1}H_xT^{-1}y.
\end{equation}
The equality of optimal values and predicted reductions follows immediately, and the
expressions in \eqref{eq:gy-By} follow by differentiation.
\end{proof}

The standard Cauchy decrease condition can also be written directly in the metric of
the original variables.  This form makes clear how the choice of \(M\) enters the
first-order decrease mechanism.
For a symmetric matrix \(H\), write
\[
    \norm{H}_{M}:=\norm{M^{-1/2}HM^{-1/2}}_2 .
\]

\begin{proposition}[Metric form of Cauchy decrease]
\label{prop:metric-cauchy}
Let \(m_x\) be given by \eqref{eq:model-x}.  Suppose that the induced step \(y\) satisfies
the usual Cauchy decrease condition for \(\widetilde m\) on \(\norm{y}\le\Delta\):
\begin{equation}
    \widetilde m(0)-\widetilde m(y)
    \ge \frac12 \norm{g_y}
    \min\left\{\Delta,\frac{\norm{g_y}}{\norm{B_y}_2}\right\},
    \label{eq:y-cauchy}
\end{equation}
with the convention that the quotient is \(+\infty\) when \(B_y=0\).  Then
\(s=T^{-1}y\) satisfies
\begin{equation}
    m_x(0)-m_x(s)
    \ge \frac12 \norm{g_x}_{M^{-1}}
    \min\left\{\Delta,
    \frac{\norm{g_x}_{M^{-1}}}{\norm{H_x}_{M}}
    \right\}.
    \label{eq:metric-cauchy}
\end{equation}
\end{proposition}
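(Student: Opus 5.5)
The plan is to transport the inequality \eqref{eq:y-cauchy} back to the original variables term by term, using \Cref{thm:affine-equiv}. First, by \eqref{eq:pred-preserved} with \(y=Ts\), the left-hand sides of \eqref{eq:y-cauchy} and \eqref{eq:metric-cauchy} coincide:
\[
m_x(0)-m_x(s)=\widetilde m(0)-\widetilde m(y).
\]
It therefore suffices to show that the right-hand sides agree. This comes down to identifying \(\norm{g_y}\) and \(\norm{B_y}_2\) with the metric quantities \(\norm{g_x}_{M^{-1}}\) and \(\norm{H_x}_M\).

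For the gradient, \eqref{eq:gy-By} gives \(g_y=T^{-1}g_x\). Since \(T\) is symmetric and \(T^{-2}=M^{-1}\), we have
\[
\norm{g_y}^2=g_x^\top T^{-1}T^{-1}g_x=g_x^\top M^{-1}g_x=\norm{g_x}_{M^{-1}}^2.
\]
Here \(M^{-1}\succ0\), so the \(M^{-1}\)-norm is well defined.

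For the Hessian, \eqref{eq:gy-By} gives \(B_y=T^{-1}H_xT^{-1}=M^{-1/2}H_xM^{-1/2}\). By the definition \eqref{eq:HnormM}, this yields \(\norm{B_y}_2=\norm{H_x}_M\).

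Substituting both identities into \eqref{eq:y-cauchy} produces exactly \eqref{eq:metric-cauchy}, with equal (not merely bounded) right-hand sides.

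The only point needing care is the degenerate case. The convention sets the quotient to \(+\infty\) when \(B_y=0\), and this must carry over to \eqref{eq:metric-cauchy}. Since \(T^{-1}\) is invertible, \(B_y=0\) holds if and only if \(H_x=0\), which in turn holds if and only if \(\norm{H_x}_M=0\). So the conventions match, and the minimum in both bounds reduces to \(\Delta\).

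No step is a real obstacle; the argument is a direct change of variables.
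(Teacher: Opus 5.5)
Your proposal is correct and follows the paper's proof: you identify \(\norm{g_y}=\norm{g_x}_{M^{-1}}\) and \(\norm{B_y}_2=\norm{H_x}_M\) via \eqref{eq:gy-By}, then use the predicted-reduction identity \eqref{eq:pred-preserved} to carry \eqref{eq:y-cauchy} over to \eqref{eq:metric-cauchy}. Your check that \(B_y=0\) if and only if \(H_x=0\), so the \(+\infty\) convention transfers, is a small detail the paper leaves implicit.
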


\begin{proof}
By \eqref{eq:gy-By},
\[
  \norm{g_y}=\norm{M^{-1/2}g_x}=\norm{g_x}_{M^{-1}},
  \qquad
  \norm{B_y}_2=\norm{M^{-1/2}H_xM^{-1/2}}_2=\norm{H_x}_{M}.
\]
The predicted reduction identity \eqref{eq:pred-preserved}
then turns \eqref{eq:y-cauchy} into \eqref{eq:metric-cauchy}.
\end{proof}

\subsection{Poisedness in metric balls}
\label{subsec:M-poised}
At iteration \(k\), the modeled function in induced variables is
\begin{equation}
    \widetilde f_k(y)=f(x_k+T_k^{-1}y),
    \qquad \norm{y}\le \Delta_k,
    \label{eq:induced-f}
\end{equation}
where \(T_k=M_k^{1/2}\).  Let
\begin{equation}
    q_n=\frac{(n+1)(n+2)}{2}
    \label{eq:qn-def}
\end{equation}
be the dimension of the quadratic polynomial space.  A full quadratic interpolation
model in induced variables has the form
\begin{equation}
    \widetilde m_k(y)=f(x_k)+g_{y,k}^\top y+\frac12 y^\top B_{y,k}y.
    \label{eq:model-induced}
\end{equation}
The trial step is computed from
\begin{equation}
    \min_{\norm{y}\le\Delta_k}
    g_{y,k}^\top y+\frac12 y^\top B_{y,k}y,
    \label{eq:y-subproblem}
\end{equation}
and the original step is \(s_k=T_k^{-1}y_k\).  If the model is expressed in the original
variables, its Hessian is
\begin{equation}
    H_{x,k}=T_k B_{y,k}T_k.
    \label{eq:Hx-from-By}
\end{equation}
This matrix is the curvature information passed to the metric-selection routine.

We use the standard notion of \(\Lambda\)-poisedness in Euclidean coordinates, but apply
it after the induced-coordinate map.  This convention matters because changing
\(M_k\) changes the coordinates in which archived points are viewed.

\begin{definition}[Poisedness in a metric ball]
\label{def:M-poised}
Let \(M\succ0\), \(T=M^{1/2}\), and let
\(Y=\{x+s^{(0)},\ldots,x+s^{(q_n-1)}\}\subset\Bball_M(x,\Delta)\).  We say that
\(Y\) is \(\Lambda\)-poised in \(\Bball_M(x,\Delta)\) if the transformed set
\begin{equation}
    \widetilde Y=\bigl\{T s^{(0)},\ldots,T s^{(q_n-1)}\bigr\}
    \label{eq:transformed-set}
\end{equation}
is \(\Lambda\)-poised in the Euclidean ball \(B(0,\Delta)\) for quadratic interpolation.
\end{definition}

Metric poisedness is the usual poisedness condition applied in the variables in which
the trust region is a ball.  The classical fully quadratic interpolation theory can
therefore be used once all quantities are written in induced coordinates.

\subsection{The abstract algorithm}
\label{subsec:abstract-alg}
The metric update is written abstractly as
\begin{equation}
    M_{k+1}=\MetSel(H_{x,k},M_k,A_k,\Delta_k;\theta),
    \label{eq:metric-selection-abstract}
\end{equation}
where \(A_k\) is the archive of sampled points and \(\theta\) denotes fixed metric
parameters.  The convergence proof uses only the following contract.

\begin{assumption}[Uniform metric bounds]
\label{asmp:metric-contract}
There exist constants \(0<m_{\min}\le m_{\max}<\infty\) such that every metric returned
by \(\MetSel\) satisfies
\begin{equation}
    m_{\min}I\preceq M_k\preceq m_{\max}I,
    \qquad k=0,1,2,\ldots .
    \label{eq:metric-contract}
\end{equation}
\end{assumption}

\Cref{alg:abstract-matro} is the abstract method analyzed in Section~\ref{sec:convergence}.
The specific spectral rule used in the experiments is introduced in Section~\ref{sec:metric}.
Implementation safeguards used in the numerical section, such as a lower radius bound
or adaptive initialization, are not part of the complexity proof.

\begin{algorithm}[t]
\caption{Abstract \MATRO{} method}
\label{alg:abstract-matro}
\begin{algorithmic}[1]
\Require Objective \(f\), initial point \(x_0\), radius \(\Delta_0>0\), parameters
\(0<\eta_1\le\eta_2<1\), \(0<\gamma_{\rm dec}<1<\gamma_{\rm inc}\), criticality constant \(\mu_{\rm crit}\), and a \MetSel{} routine.  The criticality threshold is normalized to one.
\State Set \(M_0=I\), \(T_0=I\), and initialize the archive with \((x_0,f(x_0))\).
\State Build an initial \(\Lambda\)-poised set of \(q_n\) points in induced coordinates;
evaluate \(f\) and add the points to the archive.
\For{\(k=0,1,2,\ldots\)}
    \Repeat
        \State Select or repair a \(\Lambda\)-poised interpolation set in \(\norm{y}\le\Delta_k\);
        fit \(\widetilde m_k\) in \eqref{eq:model-induced}.
        \If{\(\norm{g_{y,k}}\le1\) and \(\Delta_k>\mu_{\rm crit}\norm{g_{y,k}}\)}
            \State Set \(\Delta_k\leftarrow \mu_{\rm crit}\norm{g_{y,k}}\).
        \EndIf
    \Until{\(\norm{g_{y,k}}>1\) or \(\Delta_k\le\mu_{\rm crit}\norm{g_{y,k}}\)}
    \State Approximately solve \eqref{eq:y-subproblem} to obtain \(y_k\) satisfying the Cauchy decrease condition.
    \State Set \(s_k=T_k^{-1}y_k\); evaluate \(f(x_k+s_k)\) and add the trial point to the archive.
    \State Compute \(\pred_k=\widetilde m_k(0)-\widetilde m_k(y_k)\) and
    \(\ared_k=f(x_k)-f(x_k+s_k)\).
    \State Set \(\rho_k=\ared_k/\pred_k\) if \(\pred_k>0\), and \(\rho_k=-\infty\) otherwise.
    \State Accept \(x_{k+1}=x_k+s_k\) if \(\rho_k\ge\eta_1\); otherwise set \(x_{k+1}=x_k\).
    \State Update \(\Delta_{k+1}\) by the standard trust-region rule.
    \State Form \(H_{x,k}=T_kB_{y,k}T_k\), set
    \(M_{k+1}=\MetSel(H_{x,k},M_k,A_k,\Delta_k;\theta)\), and set
    \(T_{k+1}=M_{k+1}^{1/2}\).
\EndFor
\end{algorithmic}
\end{algorithm}

\section{Metrics from Interpolation Curvature}
\label{sec:metric}

We now turn from the abstract metric framework to the construction of the metric.
The guiding object is the Hessian of the fully quadratic interpolation model.  When the
interpolation geometry is adequate and the radius is small, this Hessian approximates the
true local Hessian; before that asymptotic regime is reached, it remains the only
second-order object available to a fully quadratic DFO method.  The analysis in this
section explains how much one can gain from this curvature, what is lost when the
metric must be capped for stability, and how the same idea should be interpreted when
the fitted Hessian is indefinite.

\subsection{Induced conditioning as a metric criterion}
\label{subsec:metric-objective}
For an SPD curvature matrix \(H\), a natural measure of the anisotropy left after choosing
\(M\succ0\) is
\begin{equation}
    \kappa_M(H)=\cond(M^{-1/2}HM^{-1/2}).
    \label{eq:metric-objective-cond}
\end{equation}
This quantity is the condition number of the quadratic model in induced variables.
Since multiplying \(M\) by a positive scalar can be absorbed into the radius, we fix the
scale by imposing \(\det(M)=1\).  The idealized design problem for a positive definite
quadratic model is therefore
\begin{equation}
    \hbox{choose } M\succ0,\quad \det(M)=1,
    \quad \hbox{so that } \cond(M^{-1/2}HM^{-1/2}) \hbox{ is small.}
    \label{eq:metric-design-problem}
\end{equation}
The next results follow this design problem from the ideal SPD case to the stabilized
rule used by the algorithm.  The order is deliberate: first the exact quadratic calculation
identifies the metric one would choose if the curvature were known and positive
definite; then the cap, approximation error, and indefinite-curvature cases explain the
modifications needed in a derivative-free implementation.

\subsection{Positive curvature and Newton rays}
\label{subsec:exact-SPD}
The exact SPD quadratic case provides the reference point for the whole construction.
Here the curvature matrix is both a reliable local model Hessian and a valid positive
definite shape matrix.  The determinant constraint removes irrelevant scaling and leaves
a pure shape-selection problem.

\begin{proposition}[Optimal metric for positive definite quadratics]
\label{prop:optimal-det-metric}
Let \(H\succ0\) and define
\begin{equation}
    \Mclass_{\det}:=\{M\succ0:\det(M)=1\}.
\end{equation}
Then
\begin{equation}
    M_\star=\frac{H}{\det(H)^{1/n}}
    \label{eq:Mstar-general}
\end{equation}
belongs to \(\Mclass_{\det}\) and satisfies
\begin{equation}
    M_\star^{-1/2}HM_\star^{-1/2}=\det(H)^{1/n} I.
    \label{eq:Mstar-isotropic-general}
\end{equation}
Consequently, \(\kappa_{M_\star}(H)=1\), which is the minimum possible induced
condition number.  The minimizer is unique under the determinant constraint.
\end{proposition}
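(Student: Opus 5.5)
The plan is a direct computation followed by a characterization of the equality case in \(\cond(\cdot)\ge1\). Throughout, write \(d:=\det(H)^{1/n}>0\), which is well defined because \(H\succ0\).

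\textbf{Membership and isotropy.} I would first check that \(M_\star=H/d\) lies in \(\Mclass_{\det}\). It is SPD as a positive multiple of \(H\), and
\[
\det(M_\star)=d^{-n}\det(H)=1 .
\]
Next, since \(M_\star\) is a scalar multiple of \(H\), the two matrices share eigenvectors. The SPD square root therefore scales the same way, \(M_\star^{1/2}=d^{-1/2}H^{1/2}\), and hence \(M_\star^{-1/2}=d^{1/2}H^{-1/2}\). Substituting gives
\[
M_\star^{-1/2}HM_\star^{-1/2}=d\,H^{-1/2}HH^{-1/2}=dI ,
\]
which is \eqref{eq:Mstar-isotropic-general}. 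It follows that \(\kappa_{M_\star}(H)=\cond(dI)=1\). This value is minimal, because any SPD matrix satisfies \(\cond=\lammax/\lammin\ge1\).

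\textbf{Uniqueness.} Let \(M\in\Mclass_{\det}\) attain \(\kappa_M(H)=1\).
\begin{enumerate}
\item The matrix \(A:=M^{-1/2}HM^{-1/2}\) is SPD and has \(\lammax(A)=\lammin(A)\).
\item Its spectral decomposition then forces \(A=cI\) for some \(c>0\).
\item Conjugating by \(M^{1/2}\) gives \(H=M^{1/2}AM^{1/2}=cM\), so \(M=H/c\).
\item Imposing \(\det(M)=1\) gives \(c^n=\det(H)\). Since \(c>0\), this yields \(c=d\) and therefore \(M=M_\star\).
\end{enumerate}

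\textbf{Main obstacle.} No step is genuinely hard. The only points needing care are the following.
\begin{itemize}
\item The square-root identity for a scalar multiple should be justified through the shared eigenbasis (or the uniqueness of the SPD square root), not assumed.
\item Step 2 above needs its justification stated: a symmetric matrix whose eigenvalues all coincide is a multiple of the identity.
\item In the uniqueness argument, the positive root \(c=d\) must be taken. This is legitimate because \(c>0\), as \(A\) is SPD.
\end{itemize}
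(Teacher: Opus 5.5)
Your proposal is correct and follows the paper's proof essentially step for step. Both check \(\det(M_\star)=1\), compute \(M_\star^{-1/2}=d^{1/2}H^{-1/2}\) to get \(dI\), and note that \(\cond\ge1\). For uniqueness, both deduce \(M^{-1/2}HM^{-1/2}=cI\), hence \(H=cM\), and the determinant forces \(c=\det(H)^{1/n}\). The extra justifications you flag (shared eigenbasis for the square root, equal eigenvalues forcing a scalar matrix, taking the positive root) are details the paper leaves implicit.
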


\begin{proof}
Let \(\alpha=\det(H)^{1/n}\).  Then \(M_\star=H/\alpha\) and
\(\det(M_\star)=\det(H)/\alpha^n=1\).  Moreover
\(M_\star^{-1/2}=\alpha^{1/2}H^{-1/2}\), so
\(M_\star^{-1/2}HM_\star^{-1/2}=\alpha I\).  No SPD matrix can produce an induced
condition number below one.  If another \(M\in\Mclass_{\det}\) also gives condition
number one, then \(M^{-1/2}HM^{-1/2}=\beta I\) for some \(\beta>0\), hence
\(H=\beta M\).  Taking determinants gives \(\det(H)=\beta^n\det(M)=\beta^n\), so
\(\beta=\det(H)^{1/n}\) and \(M=H/\beta=M_\star\).
\end{proof}

The preceding proposition describes the shape of the quadratic in induced variables.
The next proposition translates that shape statement into the actual step.  With the
curvature metric, the constrained step follows the Newton ray and is shortened only when
the metric radius cuts off the full Newton step.

\begin{proposition}[Newton-ray property of the curvature metric]
\label{prop:newton-ray}
Let
\begin{equation}
    m(s)=f_0+g^\top s+\frac12s^\top Hs,
    \qquad H\succ0,
    \label{eq:spd-model-newton-ray}
\end{equation}
and let \(M=H/\alpha\), where \(\alpha=\det(H)^{1/n}\).  The solution of
\begin{equation}
    \min_{s^\top Ms\le \Delta^2} g^\top s+\frac12s^\top Hs
    \label{eq:metric-spd-subproblem}
\end{equation}
is
\begin{equation}
    s_\Delta=-t_\Delta H^{-1}g,
    \qquad
    t_\Delta=
    \min\left\{1,\ \Delta\sqrt{\frac{\alpha}{g^\top H^{-1}g}}\right\},
    \label{eq:truncated-newton-step}
\end{equation}
with the convention that \(s_\Delta=0\) if \(g=0\).  Moreover,
\begin{equation}
    m(0)-m(s_\Delta)
    =t_\Delta\left(1-\frac{t_\Delta}{2}\right)g^\top H^{-1}g.
    \label{eq:newton-ray-pred}
\end{equation}
\end{proposition}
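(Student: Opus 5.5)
We pass to induced variables via \Cref{thm:affine-equiv} with \(T=M^{1/2}=(H/\alpha)^{1/2}\). By \eqref{eq:gy-By} the induced Hessian is \(B_y=T^{-1}HT^{-1}=\alpha I\); this is the isotropy identity \eqref{eq:Mstar-isotropic-general} of \Cref{prop:optimal-det-metric}. The induced gradient is \(g_y=T^{-1}g=\alpha^{1/2}H^{-1/2}g\). The subproblem \eqref{eq:metric-spd-subproblem} therefore becomes
\[
\min_{\norm{y}\le\Delta}\ g_y^\top y+\frac{\alpha}{2}\norm{y}^2 .
\]
This is a strictly convex problem with a spherical Hessian, so it has a unique minimizer. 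That minimizer lies on the ray \(-g_y\). To see this, split \(y=-\tau g_y/\norm{g_y}+w\) with \(w\perp g_y\). The component \(w\) leaves the linear term unchanged and only increases \(\norm{y}\), and hence increases the objective. The case \(g=0\) is trivial, giving \(y=0\) and \(s=0\).

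On the ray we minimize the scalar function \(\phi(\tau)=-\tau\norm{g_y}+\frac{\alpha}{2}\tau^2\) over \(0\le\tau\le\Delta\). Its unconstrained minimizer is \(\tau=\norm{g_y}/\alpha\), so the constrained minimizer is \(\tau^\ast=\min\{\norm{g_y}/\alpha,\Delta\}\). Equivalently, \(y^\ast=-t\,g_y/\alpha\) with \(t=\min\{1,\alpha\Delta/\norm{g_y}\}\). Since \(\norm{g_y}^2=g^\top T^{-1}T^{-1}g=\alpha\,g^\top H^{-1}g\), we get \(\alpha\Delta/\norm{g_y}=\Delta\sqrt{\alpha/(g^\top H^{-1}g)}\), which matches \(t_\Delta\) in \eqref{eq:truncated-newton-step}. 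Mapping back, \(s=T^{-1}y^\ast=-(t/\alpha)T^{-1}T^{-1}g=-(t/\alpha)\,\alpha H^{-1}g=-t_\Delta H^{-1}g\), using \(T^{-2}=M^{-1}=\alpha H^{-1}\). Uniqueness of \(y^\ast\), together with the bijectivity of \(T\), gives uniqueness of \(s_\Delta\).

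For the predicted reduction we can compute directly in the original variables:
\[
m(0)-m(s_\Delta)=t_\Delta\,g^\top H^{-1}g-\tfrac12 t_\Delta^2\,g^\top H^{-1}g,
\]
which is \eqref{eq:newton-ray-pred}. Alternatively, it equals \(-\phi(\tau^\ast)\) by \eqref{eq:pred-preserved}.

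None of these steps is a real obstacle. The only care needed is in the bookkeeping of the factors of \(\alpha\) when translating the truncation threshold back to original variables. The cleanest route is to write \(T^{-2}=\alpha H^{-1}\) and keep all \(\alpha\)'s explicit.
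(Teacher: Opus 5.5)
Your proposal is correct and follows essentially the same route as the paper. You pass to induced variables, where the Hessian becomes \(\alpha I\) and \(g_y=\alpha^{1/2}H^{-1/2}g\); you take either the unconstrained minimizer \(-\alpha^{-1}g_y\) or its truncation to the boundary along \(-g_y\); and you map back using \(T^{-2}=\alpha H^{-1}\). Your orthogonal-decomposition argument that the minimizer lies on the ray \(-g_y\) and your explicit uniqueness remark fill in details the paper states without justification (one small gap: negative \(\tau\), which is dominated by \(\tau=0\)).
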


\begin{proof}
If \(g=0\), the claim is immediate.  Assume \(g\ne0\).  In the induced variables
\(y=M^{1/2}s\), the Hessian is \(M^{-1/2}HM^{-1/2}=\alpha I\) by
\cref{prop:optimal-det-metric}, and the gradient is
\(g_y=M^{-1/2}g=\alpha^{1/2}H^{-1/2}g\).  Hence the induced subproblem is
\begin{equation}
    \min_{\norm{y}\le \Delta} g_y^\top y+\frac{\alpha}{2}\norm{y}^2.
\end{equation}
Its unconstrained minimizer is \(-\alpha^{-1}g_y\), whose norm is
\(\sqrt{g^\top H^{-1}g/\alpha}\).  If this point is feasible, then \(t_\Delta=1\).  Otherwise
the minimizer is the boundary point in the direction \(-g_y\), which corresponds to
\(t_\Delta=\Delta\sqrt{\alpha/(g^\top H^{-1}g)}\).  Mapping back by
\(s=M^{-1/2}y\) gives \eqref{eq:truncated-newton-step}.  Substitution into
\eqref{eq:spd-model-newton-ray} gives \eqref{eq:newton-ray-pred}.
\end{proof}

\begin{corollary}[One-step recovery for exact positive definite quadratics]
\label{cor:exact-quadratic-recovery}
Let \(f(x)=f_\star+\frac12(x-x_\star)^\top H(x-x_\star)\) with \(H\succ0\).  Suppose
that the interpolation model is exact at \(x_k\), and let
\(g_k=\nabla f(x_k)\) and \(M_k=H/\det(H)^{1/n}\).  If
\begin{equation}
    \Delta_k\ge
    \sqrt{\frac{g_k^\top H^{-1}g_k}{\det(H)^{1/n}}},
    \label{eq:newton-feasible-condition}
\end{equation}
then the metric trust-region step gives \(x_{k+1}=x_\star\).  If
\eqref{eq:newton-feasible-condition} fails, the step lies on the Newton ray
\(x_k-tH^{-1}g_k\) with \(0<t<1\).
\end{corollary}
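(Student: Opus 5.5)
The plan is to reduce the corollary to \cref{prop:newton-ray}. Because the interpolation model is exact at \(x_k\), the model in the original step variable is the Taylor expansion of \(f\) at \(x_k\). Since \(f\) is quadratic, this expansion is \(f\) itself:
\[
m(s)=f(x_k)+g_k^\top s+\tfrac12 s^\top Hs, \qquad g_k=H(x_k-x_\star).
\]
The metric \(M_k=H/\det(H)^{1/n}\) is exactly the curvature metric of \cref{prop:newton-ray} with \(\alpha=\det(H)^{1/n}\). By \cref{thm:affine-equiv}, the induced subproblem \eqref{eq:y-subproblem} is equivalent to the metric subproblem \eqref{eq:metric-spd-subproblem}. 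So, taking the trial step to be the exact subproblem solution, it is
\[
s_k=-t_\Delta H^{-1}g_k, \qquad t_\Delta=\min\bigl\{1,\ \Delta_k\sqrt{\alpha/(g_k^\top H^{-1}g_k)}\bigr\}.
\]

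The two cases then follow by reading off \(t_\Delta\).

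\textbf{Case 1.} Condition \eqref{eq:newton-feasible-condition} rearranges to \(\Delta_k\sqrt{\alpha/(g_k^\top H^{-1}g_k)}\ge1\), so \(t_\Delta=1\). The step is then the full Newton step,
\[
x_k+s_k=x_k-H^{-1}H(x_k-x_\star)=x_\star .
\]
This point has \(\ared_k=\pred_k>0\), hence \(\rho_k=1\ge\eta_1\), and it is accepted, giving \(x_{k+1}=x_\star\). The degenerate case \(g_k=0\) means \(x_k=x_\star\) already. In that case the convention \(s_\Delta=0\) gives \(x_{k+1}=x_\star\) trivially, or the step is simply rejected and \(x_k\) stays at \(x_\star\).

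\textbf{Case 2.} If \eqref{eq:newton-feasible-condition} fails, then \(g_k\ne0\) and \(t_\Delta=\Delta_k\sqrt{\alpha/(g_k^\top H^{-1}g_k)}\in(0,1)\). The trial point therefore lies on the ray \(x_k-tH^{-1}g_k\) with \(0<t<1\).

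The main care point is interpretive, not computational. The corollary must be read with the trial step taken as the exact subproblem minimizer, not merely a Cauchy-decrease step. "Exact model" should mean \(g_{y,k}=T_k^{-1}\nabla f(x_k)\) and \(B_{y,k}=T_k^{-1}HT_k^{-1}\), which is automatic for full quadratic interpolation of a quadratic on a poised set. I would state both conventions explicitly and then invoke the propositions.
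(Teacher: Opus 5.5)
Your proposal is correct and follows the paper's proof, which simply applies \cref{prop:newton-ray} with \(g=g_k\) and notes that the full Newton step \(-H^{-1}g_k\) reaches \(x_\star\) exactly when \eqref{eq:newton-feasible-condition} holds. Your additional checks fill in details the paper leaves implicit: acceptance via \(\rho_k=1\), the degenerate case \(g_k=0\), and the convention that the step is the exact subproblem minimizer rather than just a Cauchy step.
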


\begin{proof}
Apply \cref{prop:newton-ray} with \(g=g_k\).  The unconstrained Newton step
\(-H^{-1}g_k\) reaches \(x_\star\).  The feasibility condition is exactly
\eqref{eq:newton-feasible-condition}.
\end{proof}

The same calculation also gives the Cauchy decrease constant used below.
For \(M=H/\alpha\), \(\norm{H}_M=\alpha\) and
\(\norm{g}_{M^{-1}}^2=\alpha g^\top H^{-1}g\).  Thus the metric Cauchy decrease in
\cref{prop:metric-cauchy} is expressed through the Newton decrement
\(g^\top H^{-1}g\) and the geometric-mean curvature \(\alpha\), instead of through the
largest eigenvalue of \(H\).  This is one way in which the metric changes constants
without changing the standard trust-region order of complexity.

\subsection{The effect of a metric cap}
\label{subsec:capping}
In practice, a metric with a very large condition number is undesirable.  It can make
archived interpolation points poorly distributed in induced coordinates and can amplify
roundoff errors.  A condition-number cap restricts the amount of anisotropy allowed in
the trust region.  The next result shows that such a cap also imposes a fundamental
limit: no metric with \(\cond(M)\le K\) can remove more than a factor \(K\) of the
condition number of an SPD quadratic.

\begin{proposition}[Conditioning limit under a metric cap]
\label{prop:cap-lower-bound}
Let \(H\succ0\) and let \(M\succ0\) satisfy \(\cond(M)\le K\).  Then
\begin{equation}
    \cond(M^{-1/2}HM^{-1/2})\ge \frac{\cond(H)}{K}.
    \label{eq:cap-lower-bound}
\end{equation}
\end{proposition}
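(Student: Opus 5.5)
The plan is to bound the extreme eigenvalues of the induced matrix \(A:=M^{-1/2}HM^{-1/2}\) through Rayleigh quotients. Then we combine the two bounds so that the eigenvalues of \(M\) enter only through \(\cond(M)\). Since \(A\) is SPD, we have \(\cond(A)=\lammax(A)/\lammin(A)\), so it suffices to find an upper bound on \(\lammin(A)\) and a lower bound on \(\lammax(A)\).

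For the largest eigenvalue, we substitute \(y=M^{1/2}s\) as in \cref{thm:affine-equiv}. This gives
\[
  \lammax(A)=\max_{s\ne0}\frac{s^\top Hs}{s^\top Ms}.
\]
We choose \(s=v_{\max}\), a unit eigenvector of \(H\) for \(\lammax(H)\), and use \(s^\top Ms\le\lammax(M)\) to obtain \(\lammax(A)\ge\lammax(H)/\lammax(M)\).

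For the smallest eigenvalue, we use the same quotient as a minimum. Choosing \(s=v_{\min}\), a unit eigenvector of \(H\) for \(\lammin(H)\), together with \(s^\top Ms\ge\lammin(M)\), gives \(\lammin(A)\le\lammin(H)/\lammin(M)\).

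Dividing the two bounds yields
\[
  \cond(A)\ge\frac{\lammax(H)}{\lammin(H)}\cdot\frac{\lammin(M)}{\lammax(M)}=\frac{\cond(H)}{\cond(M)}\ge\frac{\cond(H)}{K}.
\]

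As an alternative, the same result follows from submultiplicativity of the condition number. We write \(H=M^{1/2}AM^{1/2}\), so \(\cond(H)\le\cond(M^{1/2})^2\cond(A)=\cond(M)\cond(A)\).

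There is no real obstacle here. The only point needing care is to use the Courant–Fischer characterization of the generalized Rayleigh quotient correctly, with the max for \(\lammax\) and the min for \(\lammin\), so that each test vector gives an inequality in the right direction. It is also worth remarking that the bound is attained by the commuting metric in \cref{tab:intro-two-d}, so the bound is sharp.
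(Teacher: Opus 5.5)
Your proposal is correct and follows essentially the same route as the paper. Both bound $\lambda_{\max}$ and $\lambda_{\min}$ of $M^{-1/2}HM^{-1/2}$ through the generalized Rayleigh quotient $s^\top Hs/s^\top Ms$ (you make the eigenvector test vectors explicit) and divide the two bounds to obtain $\cond(H)/\cond(M)\ge\cond(H)/K$; your submultiplicativity alternative and your sharpness remark via $\diag(K^{-1/2},K^{1/2})$ are also valid and match the paper's comment.
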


\begin{proof}
Using the generalized Rayleigh quotient,
\begin{equation}
    \lammax(M^{-1/2}HM^{-1/2})
    =\max_{v\ne0}\frac{v^\top Hv}{v^\top Mv}
    \ge \frac{\lammax(H)}{\lammax(M)},
\end{equation}
and
\begin{equation}
    \lammin(M^{-1/2}HM^{-1/2})
    =\min_{v\ne0}\frac{v^\top Hv}{v^\top Mv}
    \le \frac{\lammin(H)}{\lammin(M)}.
\end{equation}
Taking the ratio gives
\begin{equation}
    \cond(M^{-1/2}HM^{-1/2})
    \ge \frac{\lammax(H)}{\lammin(H)}\frac{\lammin(M)}{\lammax(M)}
    =\frac{\cond(H)}{\cond(M)}
    \ge \frac{\cond(H)}{K}.
\end{equation}
\end{proof}

For the two-dimensional quadratic in \cref{tab:intro-two-d}, the bound is attained by
the capped metric \(M_K=\diag(K^{-1/2},K^{1/2})\).  Thus the cap is not only a
numerical safeguard; it also quantifies the maximum conditioning improvement that the
chosen metric class can express.

\subsection{Metric accuracy from interpolation curvature}
\label{subsec:alignment}
The interpolation Hessian is only an approximation.  We first record a perturbation statement: if a metric is close to a curvature shape in Loewner order, then the induced Hessian is well conditioned.

\begin{proposition}[Induced conditioning from Loewner alignment]
\label{prop:alignment}
Let \(M\succ0\) and \(H_{\rm shape}\succ0\).  Suppose there exist \(\alpha>0\) and
\(\delta\ge0\) such that
\begin{equation}
    e^{-\delta}\alpha M \preceq H_{\rm shape}\preceq e^{\delta}\alpha M.
    \label{eq:alignment-assumption}
\end{equation}
Then
\begin{equation}
    \cond(M^{-1/2}H_{\rm shape}M^{-1/2})\le e^{2\delta}.
    \label{eq:alignment-result}
\end{equation}
\end{proposition}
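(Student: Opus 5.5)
The plan is to conjugate the Loewner sandwich \eqref{eq:alignment-assumption} by \(M^{-1/2}\) and then read off the extreme eigenvalues of the induced matrix. Congruence by a nonsingular matrix preserves the Loewner order. Since \(M^{-1/2}\) is SPD, multiplying \eqref{eq:alignment-assumption} on both sides by \(M^{-1/2}\) gives
\begin{equation*}
    e^{-\delta}\alpha I \preceq M^{-1/2}H_{\rm shape}M^{-1/2}\preceq e^{\delta}\alpha I .
\end{equation*}
The only fact used here is that \(v^\top(B-A)v\ge0\) for all \(v\) implies \(w^\top C^\top(B-A)Cw\ge0\) for all \(w\), applied with \(C=M^{-1/2}\).

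Next, the matrix \(A:=M^{-1/2}H_{\rm shape}M^{-1/2}\) is symmetric positive definite, so its eigenvalues are the extreme values of its Rayleigh quotient. The Loewner bounds therefore give \(\lammin(A)\ge e^{-\delta}\alpha>0\) and \(\lammax(A)\le e^{\delta}\alpha\). Equivalently, this is the generalized Rayleigh quotient \(v^\top H_{\rm shape}v/v^\top Mv\) used in \cref{prop:cap-lower-bound}, which lies in \([e^{-\delta}\alpha,e^{\delta}\alpha]\) for all \(v\ne0\).

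Dividing the two bounds gives \(\cond(A)=\lammax(A)/\lammin(A)\le e^{\delta}\alpha/(e^{-\delta}\alpha)=e^{2\delta}\), which is \eqref{eq:alignment-result}. The argument has no real obstacle. The one point to state explicitly is that congruence preserves the Loewner order, and that positivity of \(\alpha\) makes the lower bound strictly positive, so the condition number is well defined.
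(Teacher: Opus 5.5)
Your proposal is correct and matches the paper's proof: conjugate the Loewner sandwich by \(M^{-1/2}\) to obtain \(e^{-\delta}\alpha I\preceq M^{-1/2}H_{\rm shape}M^{-1/2}\preceq e^{\delta}\alpha I\), then take the ratio of the extreme eigenvalues to get the bound \(e^{2\delta}\). You also state explicitly two points the paper leaves implicit: congruence preserves the Loewner order, and \(\alpha>0\) keeps the smallest eigenvalue positive so the condition number is well defined.
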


\begin{proof}
Premultiplying and postmultiplying \eqref{eq:alignment-assumption} by \(M^{-1/2}\)
gives
\begin{equation}
    e^{-\delta}\alpha I
    \preceq M^{-1/2}H_{\rm shape}M^{-1/2}
    \preceq e^{\delta}\alpha I.
\end{equation}
All eigenvalues of the induced matrix lie in \([e^{-\delta}\alpha,e^\delta\alpha]\), and
their ratio is at most \(e^{2\delta}\).
\end{proof}

For a fully quadratic model, this alignment estimate can be read directly from the
Hessian error bound.

\begin{corollary}[Fully quadratic Hessian error and metric accuracy]
\label{cor:fq-hessian-metric}
Let \(H_\star\succ0\) with \(\lammin(H_\star)\ge \nu>0\), and let \(H\succ0\) satisfy
\begin{equation}
    \norm{H-H_\star}_2\le \kappa_H\Delta,
    \qquad
    \theta:=\frac{\kappa_H\Delta}{\nu}<1.
    \label{eq:fq-hessian-error-metric}
\end{equation}
Let \(M=H/\det(H)^{1/n}\).  Then
\begin{equation}
    \cond(M^{-1/2}H_\star M^{-1/2})
    \le \frac{1+\theta}{1-\theta}.
    \label{eq:fq-induced-cond}
\end{equation}
\end{corollary}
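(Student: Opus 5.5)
The plan is to reduce the statement to \cref{prop:alignment} by producing a Loewner sandwich of \(H_\star\) between two multiples of \(M\).

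Write \(\beta=\det(H)^{1/n}\), so that \(H=\beta M\). The perturbation bound gives \(-\kappa_H\Delta I\preceq H_\star-H\preceq\kappa_H\Delta I\). Our goal is to turn this additive Euclidean bound into a multiplicative bound relative to \(H_\star\).

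Since \(\lammin(H_\star)\ge\nu\), we have \(I\preceq H_\star/\nu\). Hence
\[
\kappa_H\Delta I\preceq\theta H_\star.
\]
Combining this with the additive bound yields
\[
(1-\theta)H_\star\preceq H\preceq(1+\theta)H_\star.
\]
Because \(\theta<1\), both sides are positive definite, and we can invert the sandwich to
\[
\frac{H}{1+\theta}\preceq H_\star\preceq\frac{H}{1-\theta}.
\]

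Substituting \(H=\beta M\) and congruence-transforming by \(M^{-1/2}\) shows that every eigenvalue of \(M^{-1/2}H_\star M^{-1/2}\) lies in the interval
\[
\left[\frac{\beta}{1+\theta},\,\frac{\beta}{1-\theta}\right].
\]
The ratio of the endpoints is \((1+\theta)/(1-\theta)\), which is exactly the claimed bound.

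Equivalently, we could invoke \cref{prop:alignment} directly. Take \(H_{\rm shape}=H_\star\) and choose \(\alpha=\beta/\sqrt{1-\theta^2}\) and \(e^{\delta}=\sqrt{(1+\theta)/(1-\theta)}\). Then
\[
e^{-\delta}\alpha=\frac{\beta}{1+\theta},\qquad e^{\delta}\alpha=\frac{\beta}{1-\theta},
\]
and the proposition gives the bound \(e^{2\delta}=(1+\theta)/(1-\theta)\). The determinant normalization plays no role beyond fixing the scalar \(\beta\), since the condition number is scale invariant.

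The only step that needs care is converting the additive bound \(\norm{H-H_\star}_2\le\kappa_H\Delta\) into the relative bound \(\kappa_H\Delta I\preceq\theta H_\star\). This requires using \(\lammin(H_\star)\) rather than \(\lammin(H)\), which is why \(\nu\) is attached to \(H_\star\) in the hypotheses. Once that is done, everything else is monotonicity of the Loewner order under congruence and under the positive scalings that invert the sandwich.
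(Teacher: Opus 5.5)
Your proof is correct and takes essentially the same route as the paper. Both arguments turn the additive bound into the sandwich $(1-\theta)H_\star\preceq H\preceq(1+\theta)H_\star$ using $\lammin(H_\star)\ge\nu$, rescale it, substitute $H=\det(H)^{1/n}M$, and read off the eigenvalue interval after congruence by $M^{-1/2}$. The paper phrases the first step as $\norm{H_\star^{-1/2}(H-H_\star)H_\star^{-1/2}}_2\le\theta$, which is equivalent to your $\kappa_H\Delta I\preceq\theta H_\star$. Your optional reduction to \cref{prop:alignment} with $\alpha=\beta/\sqrt{1-\theta^2}$ is also valid, though the paper does the congruence step directly.
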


\begin{proof}
Since \(\lammin(H_\star)\ge\nu\),
\begin{equation}
    \norm{H_\star^{-1/2}(H-H_\star)H_\star^{-1/2}}_2
    \le \frac{\norm{H-H_\star}_2}{\nu}
    \le \theta.
\end{equation}
Thus
\begin{equation}
    (1-\theta)H_\star\preceq H\preceq (1+\theta)H_\star,
\end{equation}
and hence
\begin{equation}
    \frac{1}{1+\theta}H\preceq H_\star\preceq \frac{1}{1-\theta}H.
\end{equation}
Writing \(H=\alpha M\), with \(\alpha=\det(H)^{1/n}\), gives
\[
    \frac{\alpha}{1+\theta}M
    \preceq
    H_\star
    \preceq
    \frac{\alpha}{1-\theta}M .
\]
Premultiplying and postmultiplying by \(M^{-1/2}\), all eigenvalues of
\(M^{-1/2}H_\star M^{-1/2}\) lie in
\[
    \left[\frac{\alpha}{1+\theta},\frac{\alpha}{1-\theta}\right].
\]
Therefore
\[
    \cond(M^{-1/2}H_\star M^{-1/2})
    \le
    \frac{1+\theta}{1-\theta}.
\]
\end{proof}

\Cref{cor:fq-hessian-metric} is not used as an additional assumption in the convergence
proof.  It explains the design: when the fully quadratic Hessian is relatively accurate
for a locally positive definite true Hessian, the metric obtained from it nearly
isotropizes the true local curvature.

\subsection{Indefinite curvature and signed scaling}
\label{subsec:indefinite}
An interpolation Hessian need not be positive definite.  The metric, however, must be
SPD.  Negative curvature should remain in the quadratic model used by the
trust-region subproblem; it should not become a negative length scale in the region.
The ideal nonsingular indefinite case can also be described exactly.

\begin{proposition}[Signed isotropization by the absolute-curvature metric]
\label{prop:signed-isotropization}
Let \(H=V\diag(\lambda_1,\ldots,\lambda_n)V^\top\) be nonsingular and symmetric, and
let
\begin{equation}
    |H|:=V\diag(|\lambda_1|,\ldots,|\lambda_n|)V^\top,
    \qquad
    M_{\rm abs}:=\frac{|H|}{\det(|H|)^{1/n}}.
    \label{eq:Mabs-ideal}
\end{equation}
Then
\begin{equation}
    M_{\rm abs}^{-1/2}HM_{\rm abs}^{-1/2}
    = \det(|H|)^{1/n}
    V\diag(\operatorname{sign}(\lambda_1),\ldots,\operatorname{sign}(\lambda_n))V^\top.
    \label{eq:signed-isotropization}
\end{equation}
\end{proposition}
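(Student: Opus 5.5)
The plan is to compute everything in the eigenbasis \(V\) of \(H\), where all the matrices involved are simultaneously diagonal.

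First, \(M_{\rm abs}\) is well defined and SPD. Since \(H\) is nonsingular, every \(|\lambda_i|>0\). Hence \(|H|\succ0\) and \(\det(|H|)=\prod_i|\lambda_i|>0\). Set \(\beta:=\det(|H|)^{1/n}\). Then \(M_{\rm abs}=V\diag(|\lambda_i|/\beta)V^\top\) with positive diagonal entries, and \(\det(M_{\rm abs})=1\). This is the same normalization as in \cref{prop:optimal-det-metric}, applied to \(|H|\) in place of \(H\).

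Next, I identify the SPD square root. Because \(V\) is orthogonal, the SPD square root of \(M_{\rm abs}\) is \(V\diag\bigl((|\lambda_i|/\beta)^{1/2}\bigr)V^\top\). This follows from uniqueness of the SPD square root, since the displayed matrix is SPD and squares to \(M_{\rm abs}\). Its inverse is
\[
M_{\rm abs}^{-1/2}=V\diag\bigl((\beta/|\lambda_i|)^{1/2}\bigr)V^\top .
\]

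The key step is to multiply out \(M_{\rm abs}^{-1/2}HM_{\rm abs}^{-1/2}\). Using \(V^\top V=I\), the product collapses to
\[
V\diag\bigl(\beta\,\lambda_i/|\lambda_i|\bigr)V^\top .
\]
Since \(\lambda_i/|\lambda_i|=\operatorname{sign}(\lambda_i)\) for \(\lambda_i\neq0\), this equals \(\beta\,V\diag(\operatorname{sign}(\lambda_i))V^\top\), which is \eqref{eq:signed-isotropization}.

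There is no real obstacle. The only point needing care is that \(H\), \(|H|\) and \(M_{\rm abs}^{\pm1/2}\) share the eigenvector matrix \(V\); this is what makes the product diagonal with entries \(\beta\,\operatorname{sign}(\lambda_i)\). The same property also ensures the result does not depend on which orthonormal eigenbasis is chosen when \(H\) has repeated eigenvalues, because \(|H|\) is a spectral function of \(H\).

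As a consistency check, the argument reduces to the proof of \cref{prop:optimal-det-metric} when \(H\succ0\).
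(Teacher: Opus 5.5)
Your proposal is correct and follows the same route as the paper: the paper's proof likewise writes \(M_{\rm abs}^{-1/2}=\alpha^{1/2}V\diag(|\lambda_i|^{-1/2})V^\top\) with \(\alpha=\det(|H|)^{1/n}\) and substitutes, so the product collapses to \(\alpha V\diag(\operatorname{sign}(\lambda_i))V^\top\). You additionally justify the steps the paper leaves implicit: positivity of \(|H|\), uniqueness of the SPD square root, and independence of the choice of eigenbasis.
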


\begin{proof}
Let \(\alpha=\det(|H|)^{1/n}\).  Since
\(M_{\rm abs}^{-1/2}=\alpha^{1/2}V\diag(|\lambda_i|^{-1/2})V^\top\), substitution gives
\eqref{eq:signed-isotropization}.
\end{proof}

Thus the metric equalizes curvature magnitudes in induced variables, while the signs of
the curvature directions are preserved in the model Hessian.  An absolute-curvature shape changes only the length scale; it does not convexify the trust-region model.  In practice,
the fitted Hessian may be singular or nearly singular, so the construction uses a spectral
floor.

Concretely, for an eigendecomposition
\begin{equation}
    H_{x,k}=V\diag(\lambda_1,\ldots,\lambda_n)V^\top,
    \label{eq:Hx-eig}
\end{equation}
we form magnitudes
\begin{equation}
    a_i=\max\{\abs{\lambda_i},\sigma\},
    \qquad i=1,\ldots,n,
    \label{eq:curv-magnitudes}
\end{equation}
where \(\sigma>0\) is a spectral floor.  The resulting proxy
\begin{equation}
    H_{\rm abs,k}:=V\diag(a_1,\ldots,a_n)V^\top
    \label{eq:Habs-def}
\end{equation}
is SPD.  It should be interpreted as a curvature-shape proxy extracted from the fitted
Hessian, not as a replacement for the signed Hessian in the subproblem.

\subsection{A bounded spectral construction}
\label{subsec:spectral-rule}
The spectral rule used in the experiments converts \(H_{\rm abs,k}\) into a bounded
volume-normalized metric.  Starting from \eqref{eq:curv-magnitudes}, cap the ratio of
largest to smallest entries by setting
\begin{equation}
    \bar a_i=\max\left\{a_i,\ \kappa_{\max}^{-1}\max_j a_j\right\},
    \qquad i=1,\ldots,n .
    \label{eq:cap-ai}
\end{equation}
Then normalize the determinant:
\begin{equation}
    \widehat a_i=\frac{\bar a_i}{\left(\prod_{j=1}^n \bar a_j\right)^{1/n}},
    \qquad
    S_k=V\diag(\widehat a_1,
    \ldots,
    \widehat a_n)V^\top.
    \label{eq:S-k-def}
\end{equation}
Thus \(S_k\succ0\), \(\det(S_k)=1\), and \(\cond(S_k)\le\kappa_{\max}\).  To avoid
abrupt changes in induced coordinates, the final metric is chosen from
\begin{equation}
\begin{split}
    \Cclass_k:=\{M\succ0:
    &\ \det(M)=1,
    \ \cond(M)\le \kappa_{\max}, \\
    &\ e^{-\delta_M}M_k\preceq M\preceq e^{\delta_M}M_k\}.
\end{split}
    \label{eq:Ck-class}
\end{equation}
The final metric is obtained from \(S_k\) by taking a damped
affine-invariant geodesic step from \(M_k\) toward \(S_k\).

\begin{enumerate}[label=\textbf{(S\arabic*)}]
\item \emph{Relative eigendecomposition.}
Compute the symmetric relative matrix
\[
    C_k=M_k^{-1/2}S_kM_k^{-1/2}
       =P\diag(\mu_1,\ldots,\mu_n)P^\top,
    \qquad P^\top P=I .
\]
\item \emph{Geodesic damping.}
Set
\[
    \ell_i=\log \mu_i,\qquad
    L_k=\max_{1\le i\le n}|\ell_i|,
\]
and define
\[
    \omega_k=
    \begin{cases}
    1, & L_k=0,\\[2mm]
    \min\{1,\delta_M/L_k\}, & L_k>0.
    \end{cases}
\]
\item \emph{Metric update.}
Return
\[
    M_{k+1}
    =
    M_k^{1/2}
    P\diag\!\left(e^{\omega_k\ell_1},\ldots,e^{\omega_k\ell_n}\right)
    P^\top
    M_k^{1/2}.
\]
\end{enumerate}

\begin{proposition}[Output of the spectral construction]\label{prop:spectral-output}
Suppose \(M_k\succ0\) with \(\det(M_k)=1\) and \(\cond(M_k)\le\kappa_{\max}\),
and let \(S_k\) be defined by \eqref{eq:cap-ai}--\eqref{eq:S-k-def}.  Then the
metric \(M_{k+1}\) produced by steps \textup{(S1)--(S3)} satisfies
\(M_{k+1}\in\Cclass_k\).
\end{proposition}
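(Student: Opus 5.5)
The plan is to recognize steps (S1)--(S3) as the weighted matrix geometric mean and then check the four conditions defining \(\Cclass_k\) in \eqref{eq:Ck-class} one at a time. Since \(C_k=M_k^{-1/2}S_kM_k^{-1/2}\succ0\), the matrix \(P\diag(e^{\omega_k\ell_i})P^\top\) is the real power \(C_k^{\omega_k}\). Hence
\[
M_{k+1}=M_k^{1/2}\bigl(M_k^{-1/2}S_kM_k^{-1/2}\bigr)^{\omega_k}M_k^{1/2}=M_k\#_{\omega_k}S_k,
\]
which is the point at parameter \(\omega_k\in(0,1]\) on the affine-invariant geodesic from \(M_k\) to \(S_k\). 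Positive definiteness of \(M_{k+1}\) follows because it is a congruence of the SPD matrix \(C_k^{\omega_k}\).

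\emph{Determinant.} By multiplicativity, \(\det M_{k+1}=\det(M_k)\det(C_k)^{\omega_k}\). We have \(\det C_k=\det S_k/\det M_k=1\), using \(\det S_k=1\) from the normalization \eqref{eq:S-k-def} and the hypothesis \(\det M_k=1\). Therefore \(\det M_{k+1}=1\).

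\emph{Loewner band.} The eigenvalues of \(C_k^{\omega_k}\) are \(e^{\omega_k\ell_i}\). By the choice of \(\omega_k\), we have \(|\omega_k\ell_i|\le\omega_kL_k\le\delta_M\); in the case \(L_k=0\) all \(\ell_i=0\). This gives \(e^{-\delta_M}I\preceq C_k^{\omega_k}\preceq e^{\delta_M}I\). Congruence by \(M_k^{1/2}\) then yields \(e^{-\delta_M}M_k\preceq M_{k+1}\preceq e^{\delta_M}M_k\).

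\emph{Condition-number cap.} First I will confirm that \(\cond(S_k)\le\kappa_{\max}\). The cap \eqref{eq:cap-ai} leaves \(\max_j a_j\) unchanged and forces every \(\bar a_i\ge\kappa_{\max}^{-1}\max_j a_j\). Determinant normalization is a scalar multiple and does not change ratios. Both endpoints of the geodesic therefore have condition number at most \(\kappa_{\max}\).

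The remaining step is the main obstacle. Because \(M_k\) and \(S_k\) need not commute, the eigenvalues of \(M_{k+1}\) are not simply interpolations of the endpoint eigenvalues. I will handle this with the monotonicity of the weighted geometric mean in each argument: if \(A\preceq A'\) and \(B\preceq B'\), then \(A\#_tB\preceq A'\#_tB'\) (Kubo--Ando). Let \(a=\lammax(M_k)\) and \(b=\lammax(S_k)\). Since \(M_k\preceq aI\) and \(S_k\preceq bI\), monotonicity gives \(M_{k+1}\preceq (aI)\#_{\omega_k}(bI)=a^{1-\omega_k}b^{\omega_k}I\). The same argument with the smallest eigenvalues gives
\[
M_{k+1}\succeq \lammin(M_k)^{1-\omega_k}\lammin(S_k)^{\omega_k}I.
\]
Dividing the two bounds gives \(\cond(M_{k+1})\le\cond(M_k)^{1-\omega_k}\cond(S_k)^{\omega_k}\le\kappa_{\max}\). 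If one prefers to avoid citing Kubo--Ando, monotonicity in the second argument is immediate from operator monotonicity of \(x\mapsto x^{t}\) for \(t\in[0,1]\) (L\"owner--Heinz). Monotonicity in the first argument then follows from the symmetry \(A\#_tB=B\#_{1-t}A\). Together the four checks show that \(M_{k+1}\in\Cclass_k\).
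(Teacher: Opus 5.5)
Your proof is correct and follows essentially the same route as the paper: you identify the update as $M_k\#_{\omega_k}S_k$, obtain the determinant and the Loewner band from the relative eigenvalues $e^{\omega_k\ell_i}$ with $|\omega_k\ell_i|\le\delta_M$, and bound $\cond(M_{k+1})\le\cond(M_k)^{1-\omega_k}\cond(S_k)^{\omega_k}$ via monotonicity of the weighted geometric mean. The differences are minor. You get the $\lammin$ bound by applying monotonicity directly to $\lammin(M_k)I\preceq M_k$ and $\lammin(S_k)I\preceq S_k$, where the paper uses $(A\#_tB)^{-1}=A^{-1}\#_tB^{-1}$. You also justify two points the paper leaves implicit: why the cap yields $\cond(S_k)\le\kappa_{\max}$, and why the mean is monotone (L\"owner--Heinz together with $A\#_tB=B\#_{1-t}A$).
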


\begin{proof}
By \eqref{eq:S-k-def}, \(S_k\succ0\), \(\det(S_k)=1\), and
\(\cond(S_k)\le\kappa_{\max}\).  Since \(\det(M_k)=1\), the relative matrix
\(C_k=M_k^{-1/2}S_kM_k^{-1/2}\) satisfies \(\det(C_k)=1\).  Hence
\[
    \sum_{i=1}^n \ell_i=\log\det(C_k)=0 .
\]
The update in \textup{(S3)} has relative eigenvalues
\(e^{\omega_k\ell_i}\) with respect to \(M_k\).  Therefore
\[
    \det(M_{k+1})
    =
    \det(M_k)\prod_{i=1}^n e^{\omega_k\ell_i}
    =
    1
\]
and, by the definition of \(\omega_k\),
\[
    |\omega_k\ell_i|\le \delta_M
    \qquad\text{for all }i.
\]
Thus
\[
    e^{-\delta_M}M_k\preceq M_{k+1}\preceq e^{\delta_M}M_k .
\]

We now verify the condition-number cap.  For this purpose, note that the
update is the weighted affine-invariant geometric mean
\[
    M_{k+1}=M_k\#_{\omega_k}S_k .
\]
For any \(A,B\succ0\) and \(t\in[0,1]\), monotonicity and homogeneity of the
weighted geometric mean give
\[
    \lambda_{\max}(A\#_t B)
    \le
    \lambda_{\max}(A)^{1-t}\lambda_{\max}(B)^t,
\]
and, using \((A\#_t B)^{-1}=A^{-1}\#_t B^{-1}\),
\[
    \lambda_{\min}(A\#_t B)
    \ge
    \lambda_{\min}(A)^{1-t}\lambda_{\min}(B)^t .
\]
Consequently,
\[
    \cond(A\#_tB)\le \cond(A)^{1-t}\cond(B)^t .
\]
Applying this with \(A=M_k\), \(B=S_k\), and \(t=\omega_k\), and using
\(\cond(M_k)\le\kappa_{\max}\) and \(\cond(S_k)\le\kappa_{\max}\), yields
\[
    \cond(M_{k+1})
    \le
    \cond(M_k)^{1-\omega_k}\cond(S_k)^{\omega_k}
    \le
    \kappa_{\max}.
\]
Therefore \(M_{k+1}\in\Cclass_k\).
\end{proof}

For convergence, only the properties summarized in
\cref{thm:metric-contract-rule} are needed.  \Cref{prop:spectral-output}
shows that the construction above satisfies them.

\begin{theorem}[Metric contract for the spectral construction]
\label{thm:metric-contract-rule}
Assume \(M_0\succ0\), \(\det(M_0)=1\), and \(\cond(M_0)\le\kappa_{\max}\).  Suppose
that the metric-selection routine returns \(M_{k+1}\in\Cclass_k\) for every \(k\).  Then
\begin{equation}
    \det(M_k)=1,
    \qquad
    \cond(M_k)\le\kappa_{\max},
    \qquad k=0,1,2,\ldots,
    \label{eq:det-cond-all-k}
\end{equation}
and
\begin{equation}
    \kappa_{\max}^{-(n-1)/n} I
    \preceq M_k\preceq
    \kappa_{\max}^{(n-1)/n} I.
    \label{eq:uniform-contract-kappa}
\end{equation}
Moreover,
\begin{equation}
    e^{-\delta_M/2}\normM{s}{M_k}
    \le \normM{s}{M_{k+1}}
    \le e^{\delta_M/2}\normM{s}{M_k}
    \qquad\forall s\in\R^n.
    \label{eq:one-step-distortion}
\end{equation}
\end{theorem}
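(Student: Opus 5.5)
The proof splits into three parts: an induction giving \eqref{eq:det-cond-all-k}, an elementary eigenvalue argument giving \eqref{eq:uniform-contract-kappa}, and a direct reading of the Loewner constraint in \(\Cclass_k\) giving \eqref{eq:one-step-distortion}.

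\textbf{Step 1: determinant and condition number.} The claim \eqref{eq:det-cond-all-k} follows by induction on \(k\). The base case \(k=0\) is exactly the hypothesis on \(M_0\). For the inductive step, the hypothesis \(M_{k+1}\in\Cclass_k\) gives \(\det(M_{k+1})=1\) and \(\cond(M_{k+1})\le\kappa_{\max}\) directly from the definition \eqref{eq:Ck-class}. The induction hypothesis on \(M_k\) is not even needed here. (For the spectral rule itself, \cref{prop:spectral-output} supplies membership in \(\Cclass_k\) given the inductive properties of \(M_k\), so the two results chain together.)

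\textbf{Step 2: the uniform Loewner bounds.} Fix \(k\) and let \(0<\mu_1\le\cdots\le\mu_n\) be the eigenvalues of \(M_k\). Then
\[
\prod_i\mu_i=1, \qquad \mu_n\le\kappa_{\max}\mu_1 .
\]
To bound the top eigenvalue, use \(\mu_i\ge\mu_1\ge\mu_n/\kappa_{\max}\) for \(i<n\):
\[
1=\prod_i\mu_i\ge \mu_n\,(\mu_n/\kappa_{\max})^{n-1}=\mu_n^n\kappa_{\max}^{-(n-1)},
\]
so \(\mu_n\le\kappa_{\max}^{(n-1)/n}\).

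Symmetrically, use \(\mu_i\le\mu_n\le\kappa_{\max}\mu_1\) for \(i>1\):
\[
1\le \mu_1(\kappa_{\max}\mu_1)^{n-1},
\]
so \(\mu_1\ge\kappa_{\max}^{-(n-1)/n}\). These two bounds are \eqref{eq:uniform-contract-kappa}. In particular, \cref{asmp:metric-contract} holds with
\[
m_{\min}=\kappa_{\max}^{-(n-1)/n}, \qquad m_{\max}=\kappa_{\max}^{(n-1)/n}.
\]

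\textbf{Step 3: one-step distortion.} The condition \(e^{-\delta_M}M_k\preceq M_{k+1}\preceq e^{\delta_M}M_k\) in \(\Cclass_k\) means that for every \(s\),
\[
e^{-\delta_M}s^\top M_ks\le s^\top M_{k+1}s\le e^{\delta_M}s^\top M_ks .
\]
Taking square roots gives \eqref{eq:one-step-distortion}.

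No step is a genuine obstacle. The only point needing care is Step 2, where the two inequalities must use the right comparison: bound the other \(n-1\) eigenvalues against \(\mu_n\) for the upper bound, and against \(\mu_1\) for the lower bound.
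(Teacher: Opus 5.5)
Your proposal is correct and matches the paper's proof essentially step for step. Membership in \(\Cclass_k\) together with the initialization gives the determinant and condition-number claims, the same product-of-eigenvalues comparison yields the bounds \(\kappa_{\max}^{\pm(n-1)/n}\), and the Loewner constraint in \(\Cclass_k\) gives the norm distortion after taking square roots.
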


\begin{proof}
The determinant and condition-number statements follow directly from
\(M_{k+1}\in\Cclass_k\) and the initialization.  Let \(0<\mu_1\le\cdots\le\mu_n\) be the
eigenvalues of any \(M\succ0\) with \(\det(M)=1\) and \(\cond(M)\le\kappa_{\max}\).
Since \(\mu_n/\mu_1\le\kappa_{\max}\) and \(\prod_i\mu_i=1\),
\begin{equation}
    1=\prod_{i=1}^n\mu_i
    \ge \mu_n\left(\frac{\mu_n}{\kappa_{\max}}\right)^{n-1}
    =\frac{\mu_n^n}{\kappa_{\max}^{n-1}},
\end{equation}
so \(\mu_n\le\kappa_{\max}^{(n-1)/n}\).  Similarly,
\begin{equation}
    1=\prod_{i=1}^n\mu_i
    \le \mu_1(\kappa_{\max}\mu_1)^{n-1},
\end{equation}
so \(\mu_1\ge\kappa_{\max}^{-(n-1)/n}\).  This proves
\eqref{eq:uniform-contract-kappa}.  Finally, the Loewner inequalities in
\eqref{eq:Ck-class} imply
\begin{equation}
    e^{-\delta_M}s^\top M_ks\le s^\top M_{k+1}s\le e^{\delta_M}s^\top M_ks.
\end{equation}
Taking square roots gives \eqref{eq:one-step-distortion}.
\end{proof}

\begin{corollary}[Nesting of consecutive metric balls]
\label{cor:metric-ball-nesting}
Under the assumptions of \cref{thm:metric-contract-rule}, for every \(x\in\R^n\) and
\(\Delta>0\),
\begin{equation}
    \Bball_{M_k}\bigl(x,e^{-\delta_M/2}\Delta\bigr)
    \subseteq
    \Bball_{M_{k+1}}(x,\Delta)
    \subseteq
    \Bball_{M_k}\bigl(x,e^{\delta_M/2}\Delta\bigr).
    \label{eq:metric-ball-nesting}
\end{equation}
\end{corollary}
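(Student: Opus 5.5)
The plan is to reduce both inclusions to the one-step norm distortion \eqref{eq:one-step-distortion} from \cref{thm:metric-contract-rule}. By the definition \eqref{eq:M-ball-def}, membership \(x+s\in\Bball_M(x,\Delta)\) is equivalent to \(\normM{s}{M}\le\Delta\). The statement is therefore purely about the step \(s\), and the base point \(x\) plays no role. I will fix an arbitrary \(s\in\R^n\) and compare \(\normM{s}{M_k}\) with \(\normM{s}{M_{k+1}}\).

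For the left inclusion, take \(x+s\in\Bball_{M_k}(x,e^{-\delta_M/2}\Delta)\), so that \(\normM{s}{M_k}\le e^{-\delta_M/2}\Delta\). The upper half of \eqref{eq:one-step-distortion} gives
\[
\normM{s}{M_{k+1}}\le e^{\delta_M/2}\normM{s}{M_k}\le\Delta,
\]
and hence \(x+s\in\Bball_{M_{k+1}}(x,\Delta)\).

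For the right inclusion, take \(x+s\in\Bball_{M_{k+1}}(x,\Delta)\). The lower half of \eqref{eq:one-step-distortion} gives
\[
e^{-\delta_M/2}\normM{s}{M_k}\le\normM{s}{M_{k+1}}\le\Delta,
\]
that is, \(\normM{s}{M_k}\le e^{\delta_M/2}\Delta\), and hence \(x+s\in\Bball_{M_k}(x,e^{\delta_M/2}\Delta)\).

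There is no real obstacle here. The only point needing care is to apply each side of \eqref{eq:one-step-distortion} in the correct direction, and to note that it holds for every \(s\) because it comes from the Loewner bounds in \eqref{eq:Ck-class}, which the hypothesis \(M_{k+1}\in\Cclass_k\) guarantees.
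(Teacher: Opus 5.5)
Your proof is correct and follows the paper's argument: the paper says only that the inclusions are \eqref{eq:one-step-distortion} rewritten in terms of metric balls. You apply each half of that inequality in the right direction to get the two inclusions explicitly.
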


\begin{proof}
The inclusions are exactly \eqref{eq:one-step-distortion} written in terms of metric
balls.
\end{proof}

\Cref{thm:metric-contract-rule,cor:metric-ball-nesting} provide the bridge from metric
design to convergence.  The construction can use curvature information when it is reliable,
while the induced coordinate systems remain uniformly bounded and change gradually.

\section{Convergence under the Metric Contract}
\label{sec:convergence}

We now verify that the abstract metric trust-region method is compatible with the
standard fully quadratic convergence mechanism.  The proof is stated in induced
variables, where the trust region is Euclidean.  The new ingredient is the uniform metric
contract: it transfers smoothness, model accuracy, and stationarity between the original
variables and the induced variables.  Once this transfer is made, the decrease and
radius-accounting arguments are the usual trust-region arguments.

\subsection{Model accuracy in induced variables}
\label{subsec:conv-assumptions}
The induced model \(\widetilde m_k\) is fully quadratic for \(\widetilde f_k\) on
\(\norm{y}\le\Delta_k\) if there exist constants \((\kappa_f^y,\kappa_g^y,\kappa_H^y)\)
such that
\begin{align}
    \abs{\widetilde f_k(y)-\widetilde m_k(y)} &\le \kappa_f^y\Delta_k^3,
    \label{eq:FQ-f}\\
    \norm{\nabla\widetilde f_k(y)-\nabla\widetilde m_k(y)} &\le \kappa_g^y\Delta_k^2,
    \label{eq:FQ-g}\\
    \norm{\nabla^2\widetilde f_k(y)-B_{y,k}}_2 &\le \kappa_H^y\Delta_k,
    \label{eq:FQ-H}
\end{align}
for all \(\norm{y}\le\Delta_k\).

\begin{assumption}[Regularity and model accuracy]
\label{asmp:regularity}
There is a region \(\mathcal R\subset\R^n\) containing all induced trust-region
balls
\[
    \{x_k+M_k^{-1/2}y:\ \|y\|\le \Delta_k\},
    \qquad k=0,1,\ldots,
\]
such that \(f\) is bounded below by \(f_\star\) on \(\mathcal R\), has an
\(L_2\)-Lipschitz Hessian on \(\mathcal R\), and satisfies
\[
    \sup_{x\in\mathcal R}\|\nabla^2 f(x)\|_2\le H_f<\infty .
\]
\Cref{asmp:metric-contract} holds.  The interpolation sets are uniformly
\(\Lambda\)-poised in the current metric ball, so that the induced models are
fully quadratic with constants independent of \(k\).
\end{assumption}

The stationarity measure used by the algorithm is the induced model-gradient norm
\(\chi_k=\norm{g_{y,k}}\).  The corresponding true stationarity measure is
\begin{equation}
    \chi_k^{\rm true}=\norm{\nabla f(x_k)}_{M_k^{-1}}
    =\norm{M_k^{-1/2}\nabla f(x_k)}.
    \label{eq:true-stationarity}
\end{equation}
Indeed, \(\nabla\widetilde f_k(0)=M_k^{-1/2}\nabla f(x_k)\), so
\(\chi_k^{\rm true}=\norm{\nabla\widetilde f_k(0)}\).

\begin{proposition}[Uniform fully quadratic constants]
\label{prop:FQ-transfer}
Assume \(f\) has an \(L_2\)-Lipschitz Hessian on the sampled region and that
\Cref{asmp:metric-contract} holds.  Then each induced function
\(\widetilde f_k(y)=f(x_k+M_k^{-1/2}y)\) has Lipschitz Hessian constant at most
\(L_2m_{\min}^{-3/2}\).  Consequently, if the interpolation sets are uniformly
\(\Lambda\)-poised in the induced balls, the fully quadratic constants in
\eqref{eq:FQ-f}--\eqref{eq:FQ-H} can be chosen independently of \(k\).
\end{proposition}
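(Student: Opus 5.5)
The proof is a chain-rule computation followed by an appeal to the classical fully quadratic interpolation theory. Write \(T_k^{-1}=M_k^{-1/2}\). Since \(\widetilde f_k(y)=f(x_k+T_k^{-1}y)\) is the composition of \(f\) with an affine map, the chain rule gives
\[
\nabla^2\widetilde f_k(y)=T_k^{-1}\nabla^2 f(x_k+T_k^{-1}y)\,T_k^{-1}.
\]
For \(y,y'\) in the induced ball, submultiplicativity of the spectral norm together with the Lipschitz property of \(\nabla^2 f\) gives
\[
\norm{\nabla^2\widetilde f_k(y)-\nabla^2\widetilde f_k(y')}_2
\le \norm{T_k^{-1}}_2^2\,L_2\,\norm{T_k^{-1}(y-y')}
\le L_2\norm{T_k^{-1}}_2^3\norm{y-y'}.
\]
By \Cref{asmp:metric-contract}, \(M_k\succeq m_{\min}I\). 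Hence \(\norm{T_k^{-1}}_2=\lammin(M_k)^{-1/2}\le m_{\min}^{-1/2}\), and the constant is at most \(L_2m_{\min}^{-3/2}\), uniformly in \(k\).

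For the second claim, I will invoke the standard result of Conn, Scheinberg, and Vicente \cite{ConnScheinbergVicente2009}. It states that if a function has an \(L\)-Lipschitz Hessian on a neighborhood of the ball \(B(0,\Delta)\), and the interpolation set is \(\Lambda\)-poised in that ball for quadratic interpolation, then the quadratic interpolant satisfies fully quadratic bounds. The constants \(\kappa_f,\kappa_g,\kappa_H\) depend only on \(n\), \(\Lambda\), and \(L\) (of the form \(\cO(\Lambda L)\) times dimension factors), possibly with the usual restriction \(\Delta\le\Delta_{\max}\).

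I apply this to \(\widetilde f_k\) on \(\norm{y}\le\Delta_k\). Poisedness in the induced ball is exactly \Cref{def:M-poised}, and the Lipschitz constant has just been bounded by \(L_2m_{\min}^{-3/2}\) independently of \(k\). The constants can then be taken as
\[
\kappa_\bullet^y=\kappa_\bullet(n,\Lambda,L_2m_{\min}^{-3/2}),
\]
which do not depend on \(k\).

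The main obstacle is domain bookkeeping rather than any estimate. The Lipschitz hypothesis on \(f\) must hold along segments joining points of the image ellipsoid \(x_k+T_k^{-1}\{\norm{y}\le\Delta_k\}\), which is convex and contained in \(\mathcal R\) by \Cref{asmp:regularity}. I would therefore state the hypothesis on a convex set containing these ellipsoids. One also has to check that the classical theorem uses only the Lipschitz constant of the Hessian, and not a bound on the Hessian itself. If it does require such a bound, it is supplied by
\[
\norm{\nabla^2\widetilde f_k}_2\le H_f\,m_{\min}^{-1},
\]
which is again uniform in \(k\). The upper bound \(m_{\max}\) is not needed for this proposition.
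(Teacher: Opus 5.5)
Your proposal is correct and follows the paper's proof essentially step for step. You use the chain-rule identity for $\nabla^2\widetilde f_k$ and the bound $\|M_k^{-1/2}\|_2^3\le m_{\min}^{-3/2}$ from the lower metric bound, then appeal to the standard poisedness-to-fully-quadratic result, whose constants depend only on $n$, $\Lambda$, and the induced Lipschitz constant. Your added remarks on the convexity of the image ellipsoid, on not needing a Hessian bound or $m_{\max}$, are accurate refinements that the paper leaves implicit.
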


\begin{proof}
For any \(y,z\) in the induced ball, set
\(\xi_y=x_k+M_k^{-1/2}y\) and \(\xi_z=x_k+M_k^{-1/2}z\).  Then
\begin{align}
    \norm{\nabla^2\widetilde f_k(y)-\nabla^2\widetilde f_k(z)}_2
    &=\norm{M_k^{-1/2}\bigl(\nabla^2 f(\xi_y)-\nabla^2 f(\xi_z)\bigr)M_k^{-1/2}}_2 \notag\\
    &\le \norm{M_k^{-1/2}}_2^2 L_2\norm{\xi_y-\xi_z} \notag\\
    &\le L_2\norm{M_k^{-1/2}}_2^3\norm{y-z}
     \le L_2m_{\min}^{-3/2}\norm{y-z}.
    \label{eq:induced-Hess-Lip-proof}
\end{align}
The standard poisedness-to-fully-quadratic result for full quadratic interpolation then
gives uniform constants depending only on \(n\), \(\Lambda\), and the induced Lipschitz
constant.
\end{proof}

\begin{assumption}[Criticality safeguard]
\label{asmp:criticality}
There exists \(\mu_{\rm crit}>0\) with \(\kappa_g^y\mu_{\rm crit}^2\le 1/2\) such that,
whenever \(\chi_k\le1\), the algorithm enforces
\begin{equation}
    \Delta_k\le \mu_{\rm crit}\chi_k.
    \label{eq:criticality-safeguard}
\end{equation}
\end{assumption}

\begin{assumption}[Sufficient decrease and radius update]
\label{asmp:decrease-update}
There exist \(c_{\rm dec}>0\) and a uniform upper bound \(\bar h\) on \(\norm{B_{y,k}}_2\)
such that the trial step satisfies
\begin{equation}
    \pred_k\ge c_{\rm dec}
    \min\left\{\frac{\chi_k^2}{\bar h_+},\Delta_k\chi_k\right\},
    \qquad \bar h_+:=\max\{\bar h,1\}.
    \label{eq:cauchy-assumption}
\end{equation}
The trust-region update uses constants \(0<\eta_1\le\eta_2<1\) and
\(0<\gamma_{\rm dec}<1<\gamma_{\rm inc}\), and a maximum radius \(\Delta_{\max}\).
Rejected iterations shrink the radius by \(\gamma_{\rm dec}\); very successful iterations
with \(\rho_k\ge\eta_2\) increase the radius by at least \(\gamma_{\rm inc}\), unless the
maximum radius is active; all radii remain bounded by \(\Delta_{\max}\).
\end{assumption}

The uniform bound \(\bar h\) is consistent with \Cref{asmp:regularity}: for all
\(\|y\|\le\Delta_k\),
\[
    \|\nabla^2\widetilde f_k(y)\|_2
    \le
    \|M_k^{-1/2}\|_2^2 H_f
    \le
    m_{\min}^{-1}H_f .
\]
Together with the fully quadratic Hessian error bound, this gives for example
\[
    \|B_{y,k}\|_2
    \le
    m_{\min}^{-1}H_f+\kappa_H^y\Delta_{\max},
\]
so a finite uniform \(\bar h\) may be chosen.

\subsection{Acceptance at small radii}
\label{subsec:accuracy-acceptance}
The complexity proof uses model accuracy in two places: to compare model and true
stationarity, and to compare actual and predicted reduction.

\begin{lemma}[Stationarity transfer]
\label{lem:stationarity-bridge}
Under \Cref{asmp:regularity},
\begin{equation}
    \abs{\chi_k^{\rm true}-\chi_k}\le \kappa_g^y\Delta_k^2.
    \label{eq:stationarity-bridge}
\end{equation}
If additionally \Cref{asmp:criticality} holds and \(\chi_k\le1\), then
\begin{equation}
    \chi_k\ge \frac23\chi_k^{\rm true}.
    \label{eq:stationarity-bridge-two-thirds}
\end{equation}
\end{lemma}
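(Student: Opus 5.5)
The first inequality follows from the fully quadratic gradient bound \eqref{eq:FQ-g} evaluated at the center \(y=0\). By definition of the induced function, \(\nabla\widetilde f_k(0)=M_k^{-1/2}\nabla f(x_k)\), so \(\chi_k^{\rm true}=\norm{\nabla\widetilde f_k(0)}\) as recorded after \eqref{eq:true-stationarity}. From the induced model \eqref{eq:model-induced} we have \(\nabla\widetilde m_k(0)=g_{y,k}\), so \(\chi_k=\norm{\nabla\widetilde m_k(0)}\). The point \(y=0\) lies in the ball \(\norm{y}\le\Delta_k\). \Cref{asmp:regularity} together with \cref{prop:FQ-transfer} guarantees that \eqref{eq:FQ-g} holds with a constant \(\kappa_g^y\) independent of \(k\). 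The reverse triangle inequality then gives
\[
\abs{\chi_k^{\rm true}-\chi_k}\le\norm{\nabla\widetilde f_k(0)-\nabla\widetilde m_k(0)}\le\kappa_g^y\Delta_k^2,
\]
which is \eqref{eq:stationarity-bridge}.

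For the second claim, assume \(\chi_k\le1\). \Cref{asmp:criticality} then enforces \(\Delta_k\le\mu_{\rm crit}\chi_k\). Substituting into the bound above gives
\[
\chi_k^{\rm true}\le\chi_k+\kappa_g^y\mu_{\rm crit}^2\chi_k^2.
\]
Since \(\chi_k\le1\), we have \(\chi_k^2\le\chi_k\). Combined with \(\kappa_g^y\mu_{\rm crit}^2\le1/2\), this yields \(\chi_k^{\rm true}\le\tfrac32\chi_k\), which rearranges to \eqref{eq:stationarity-bridge-two-thirds}. The case \(\chi_k=0\) needs no separate treatment: then \(\Delta_k\le0\), so the same chain forces \(\chi_k^{\rm true}=0\) and the inequality holds trivially.

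The one step needing care is making sure the fully quadratic bound \eqref{eq:FQ-g} applies to the model actually used after the criticality loop. That loop shrinks \(\Delta_k\) and rebuilds the model on the smaller ball. The accuracy statement must therefore be read for the final pair \((\widetilde m_k,\Delta_k)\) that leaves the loop. This is exactly what \Cref{asmp:regularity} provides: it assumes uniform poisedness in the current metric ball, so the model is fully quadratic on whatever radius is finally in use. Once this is granted, both inequalities are direct consequences of the bounds above.
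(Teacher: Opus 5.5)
Your proposal is correct and follows the paper's proof essentially step for step. You evaluate the gradient bound \eqref{eq:FQ-g} at \(y=0\), apply the reverse triangle inequality, and then chain \(\Delta_k\le\mu_{\rm crit}\chi_k\), \(\kappa_g^y\mu_{\rm crit}^2\le 1/2\), and \(\chi_k^2\le\chi_k\) to get \(\chi_k^{\rm true}\le\frac32\chi_k\). Your extra remarks on the degenerate case \(\chi_k=0\) and on the model that exits the criticality loop are harmless additions.
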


\begin{proof}
The gradient bound \eqref{eq:FQ-g} at \(y=0\) gives
\begin{equation}
    \norm{M_k^{-1/2}\nabla f(x_k)-g_{y,k}}
    =\norm{\nabla\widetilde f_k(0)-\nabla\widetilde m_k(0)}
    \le \kappa_g^y\Delta_k^2.
\end{equation}
The reverse triangle inequality gives \eqref{eq:stationarity-bridge}.  If \(\chi_k\le1\),
then by the criticality safeguard,
\begin{equation}
    \kappa_g^y\Delta_k^2
    \le \kappa_g^y\mu_{\rm crit}^2\chi_k^2
    \le \frac12\chi_k^2
    \le \frac12\chi_k.
\end{equation}
Therefore \(\chi_k^{\rm true}\le \chi_k+\frac12\chi_k=\frac32\chi_k\), which is
\eqref{eq:stationarity-bridge-two-thirds}.
\end{proof}

\begin{lemma}[Actual--predicted reduction error]
\label{lem:ared-pred-gap}
Under \Cref{asmp:regularity},
\begin{equation}
    \abs{\ared_k-\pred_k}\le 2\kappa_f^y\Delta_k^3.
    \label{eq:ared-pred-gap}
\end{equation}
\end{lemma}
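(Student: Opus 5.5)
The plan is to write both reductions in induced coordinates and compare them pointwise through the fully quadratic function-value bound \eqref{eq:FQ-f}. By \cref{thm:affine-equiv}, the trial step \(s_k=T_k^{-1}y_k\) corresponds to \(y_k\) with \(\norm{y_k}\le\Delta_k\), since the subproblem \eqref{eq:y-subproblem} is solved over the Euclidean ball. The modeled function \eqref{eq:induced-f} satisfies \(\widetilde f_k(0)=f(x_k)\) and \(\widetilde f_k(y_k)=f(x_k+T_k^{-1}y_k)=f(x_k+s_k)\). Hence
\[
    \ared_k=\widetilde f_k(0)-\widetilde f_k(y_k),
    \qquad
    \pred_k=\widetilde m_k(0)-\widetilde m_k(y_k).
\]

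Subtracting these two expressions and applying the triangle inequality gives
\[
    \abs{\ared_k-\pred_k}
    \le \abs{\widetilde f_k(0)-\widetilde m_k(0)}
       +\abs{\widetilde f_k(y_k)-\widetilde m_k(y_k)}.
\]
Both points \(y=0\) and \(y=y_k\) lie in the ball \(\norm{y}\le\Delta_k\). Under \Cref{asmp:regularity} (with uniform constants justified by \cref{prop:FQ-transfer}), the bound \eqref{eq:FQ-f} therefore applies to each term, and each is at most \(\kappa_f^y\Delta_k^3\). This yields \eqref{eq:ared-pred-gap}. In fact the first term vanishes, because the model \eqref{eq:model-induced} has constant term \(f(x_k)\) and so interpolates at \(y=0\); the stated factor \(2\) is simply the safe bound.

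The only point needing care is a bookkeeping one: the fully quadratic bound must hold for the same \(\Delta_k\) used in the step. The criticality loop may shrink \(\Delta_k\) and refit the model before the step is computed. Accordingly, I would note that \Cref{asmp:regularity} is applied to the final model and radius at the moment the step is taken, so the trial point \(y_k\) lies in the ball where \eqref{eq:FQ-f} holds.
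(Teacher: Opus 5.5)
Your proposal is correct and follows essentially the same argument as the paper: you express $\ared_k$ and $\pred_k$ in induced coordinates, split their difference into the model errors at $y=0$ and $y=y_k$, and bound each by \eqref{eq:FQ-f}. Your two extra observations are valid refinements that the paper leaves implicit: the error at $y=0$ actually vanishes because $\widetilde m_k(0)=f(x_k)=\widetilde f_k(0)$, and the bound must be applied to the radius and model in force after the criticality loop.
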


\begin{proof}
With \(s_k=T_k^{-1}y_k\), we have \(\widetilde f_k(0)=f(x_k)\) and
\(\widetilde f_k(y_k)=f(x_k+s_k)\).  Hence
\begin{equation}
    \ared_k-\pred_k
    =\bigl(\widetilde m_k(y_k)-\widetilde f_k(y_k)\bigr)
    -\bigl(\widetilde m_k(0)-\widetilde f_k(0)\bigr).
\end{equation}
Each term is bounded in absolute value by \(\kappa_f^y\Delta_k^3\) from
\eqref{eq:FQ-f}.
\end{proof}

\begin{lemma}[Acceptance below the critical radius]
\label{lem:accept-small-radius}
Assume \Cref{asmp:regularity,asmp:criticality,asmp:decrease-update}.  Fix
\(\varepsilon\in(0,1)\) and suppose \(\chi_k^{\rm true}>\varepsilon\).  Define
\begin{equation}
    \Delta_\varepsilon^{\rm vs}
    :=\min\left\{
    \left(\frac{(1-\eta_2)c_{\rm dec}\varepsilon}{3\kappa_f^y}\right)^{1/2},
    \frac{2\varepsilon}{3\bar h_+},
    \frac{\Delta_{\max}}{\gamma_{\rm inc}}
    \right\}.
    \label{eq:Delta-vs}
\end{equation}
If \(\Delta_k\le \Delta_\varepsilon^{\rm vs}\), then the iteration is very successful:
\(\rho_k\ge\eta_2\), and the radius is increased by at least \(\gamma_{\rm inc}\).
\end{lemma}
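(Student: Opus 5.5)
The plan is to establish a uniform lower bound \(\chi_k\ge\frac23\varepsilon\) on the model stationarity measure. That bound turns the Cauchy decrease condition \eqref{eq:cauchy-assumption} into a predicted reduction linear in \(\Delta_k\). Comparing it with the cubic error bound of \cref{lem:ared-pred-gap} will then force \(\rho_k\ge\eta_2\). Throughout, \(\Delta_k\) denotes the radius after the criticality loop, which is the radius used in the subproblem and in all the bounds.

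\emph{Step 1 (lower bound on \(\chi_k\)).} I would split into two cases. If \(\chi_k\le1\), then \cref{lem:stationarity-bridge}, using \cref{asmp:criticality}, gives \(\chi_k\ge\frac23\chi_k^{\rm true}>\frac23\varepsilon\). If \(\chi_k>1\), then \(\chi_k>1>\varepsilon\ge\frac23\varepsilon\) because \(\varepsilon<1\). In both cases \(\chi_k>\frac23\varepsilon>0\).

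\emph{Step 2 (predicted reduction).} Since \(\Delta_k\le 2\varepsilon/(3\bar h_+)<\chi_k/\bar h_+\), the minimum in \eqref{eq:cauchy-assumption} is attained by \(\Delta_k\chi_k\). Hence
\[
\pred_k\ge c_{\rm dec}\Delta_k\chi_k\ge \tfrac23 c_{\rm dec}\varepsilon\Delta_k>0 .
\]
In particular \(\rho_k\) is the finite ratio \(\ared_k/\pred_k\).

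\emph{Step 3 (ratio test).} By \cref{lem:ared-pred-gap},
\[
|\rho_k-1|=\frac{|\ared_k-\pred_k|}{\pred_k}\le\frac{2\kappa_f^y\Delta_k^3}{\tfrac23c_{\rm dec}\varepsilon\Delta_k}=\frac{3\kappa_f^y\Delta_k^2}{c_{\rm dec}\varepsilon}.
\]
The first term in \eqref{eq:Delta-vs} is chosen so that this quantity is at most \(1-\eta_2\), which gives \(\rho_k\ge\eta_2\). Finally, \(\Delta_k\le\Delta_{\max}/\gamma_{\rm inc}\) gives \(\gamma_{\rm inc}\Delta_k\le\Delta_{\max}\). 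So the maximum-radius cap is not active, and the update rule in \cref{asmp:decrease-update} increases the radius by at least \(\gamma_{\rm inc}\).

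The only delicate point is Step 1. The stationarity transfer of \cref{lem:stationarity-bridge} is available only when \(\chi_k\le1\), so the case \(\chi_k>1\) has to be handled separately. That is exactly why the hypothesis \(\varepsilon<1\) is imposed. The remaining steps are direct substitutions into the constants of \eqref{eq:Delta-vs}.
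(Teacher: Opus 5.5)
Your proposal is correct and follows the paper's proof essentially step for step. You use the same case split to get \(\chi_k\ge\frac23\varepsilon\), the same Cauchy bound \(\pred_k\ge\frac23 c_{\rm dec}\varepsilon\Delta_k\), the same ratio estimate \(\abs{\rho_k-1}\le 3\kappa_f^y\Delta_k^2/(c_{\rm dec}\varepsilon)\le 1-\eta_2\), and the third term of \(\Delta_\varepsilon^{\rm vs}\) to keep the radius cap inactive; your explicit note that \(\pred_k>0\), so \(\rho_k\) is the actual ratio rather than \(-\infty\), is a small detail the paper leaves implicit.
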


\begin{proof}
If \(\chi_k\le1\), then \cref{lem:stationarity-bridge} gives
\(\chi_k\ge(2/3)\chi_k^{\rm true}>2\varepsilon/3\).  If \(\chi_k>1\), the same lower bound
holds because \(\varepsilon<1\).  Thus \(\chi_k\ge2\varepsilon/3\) in all cases.  Since
\(\Delta_k\le2\varepsilon/(3\bar h_+)\le \chi_k/\bar h_+\), \Cref{asmp:decrease-update}
gives
\begin{equation}
    \pred_k\ge c_{\rm dec}\Delta_k\chi_k
    \ge \frac23 c_{\rm dec}\varepsilon\Delta_k.
\end{equation}
Together with \cref{lem:ared-pred-gap},
\begin{equation}
    \abs{\rho_k-1}
    \le \frac{2\kappa_f^y\Delta_k^3}{\pred_k}
    \le \frac{3\kappa_f^y\Delta_k^2}{c_{\rm dec}\varepsilon}
    \le 1-\eta_2.
\end{equation}
Hence \(\rho_k\ge\eta_2\).  The definition of \(\Delta_\varepsilon^{\rm vs}\) also ensures
\(\Delta_k\le\Delta_{\max}/\gamma_{\rm inc}\), so the update increases the radius by at
least \(\gamma_{\rm inc}\).
\end{proof}

\subsection{Evaluation complexity}
\label{subsec:complexity}
The final ingredient is the standard radius-accounting argument: below the
threshold \(\Delta_\varepsilon^{\rm vs}\), iterations are very successful and
push the radius upward, whereas large successful iterations decrease the
objective by a fixed multiple of \(\varepsilon^2\).  The following theorem
states the resulting bound for an arbitrary prefix of iterations on which
the target stationarity has not yet been reached.

\begin{theorem}[First-order evaluation complexity]
\label{thm:eval-complexity}
Assume \Cref{asmp:regularity,asmp:criticality,asmp:decrease-update}.  Let
\(\varepsilon\in(0,1)\), and suppose
\(\chi_k^{\rm true}>\varepsilon\) for \(k=0,1,\ldots,N-1\).  Then
\begin{equation}
    N=\cO(\varepsilon^{-2}).
    \label{eq:iter-complexity}
\end{equation}
Consequently the first hitting time
\(N_\varepsilon:=\min\{k:\chi_k^{\rm true}\le\varepsilon\}\) exists and satisfies the
same order bound.  If each geometry-repair event uses at most \(C_{\rm geom}q_n\) new
function evaluations for a constant independent of \(\varepsilon\), then the evaluation
complexity is
\begin{equation}
    \cO(q_n\varepsilon^{-2})=\cO(n^2\varepsilon^{-2}).
    \label{eq:eval-complexity}
\end{equation}
\end{theorem}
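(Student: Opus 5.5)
The argument is the standard radius-accounting count, carried out in induced coordinates. Throughout we fix \(\varepsilon\in(0,1)\) and suppose \(\chi_k^{\rm true}>\varepsilon\) for \(k<N\). We partition \(\{0,\ldots,N-1\}\) into the successful set \(\mathcal S\) (\(\rho_k\ge\eta_1\)) and the unsuccessful set \(\mathcal U\).

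\textbf{Step 1: a uniform lower bound on the radius.} We use \cref{lem:accept-small-radius}: whenever \(\Delta_k\le\Delta_\varepsilon^{\rm vs}\), iteration \(k\) is very successful and the radius grows by at least \(\gamma_{\rm inc}\). An induction on \(k\) then gives
\[
\Delta_k\ge\min\{\Delta_0,\gamma_{\rm dec}\Delta_\varepsilon^{\rm vs}\}=:\Delta_{\min,\varepsilon}
\]
before the criticality step.

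One subtlety is that the criticality loop may reduce \(\Delta_k\) to \(\mu_{\rm crit}\chi_k\). We will show this never drops below a multiple of \(\varepsilon\). If \(\chi_k\le1\) and the loop is active, then \cref{lem:stationarity-bridge} gives \(\chi_k\ge\tfrac23\varepsilon\), so the reduced radius is at least \(\tfrac23\mu_{\rm crit}\varepsilon\). We then redefine
\[
\Delta_{\min,\varepsilon}:=\min\{\Delta_0,\gamma_{\rm dec}\Delta_\varepsilon^{\rm vs},\tfrac23\mu_{\rm crit}\varepsilon\}.
\]
Since \(\Delta_\varepsilon^{\rm vs}\) is at least a constant times \(\min\{\varepsilon^{1/2},\varepsilon\}\), and \(\varepsilon<1\), this gives \(\Delta_{\min,\varepsilon}\ge c\,\varepsilon\).

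\textbf{Step 2: decrease on successful iterations.} For \(k\in\mathcal S\) we combine \eqref{eq:cauchy-assumption} with \(\chi_k\ge\tfrac23\varepsilon\), which holds in both cases \(\chi_k\le1\) and \(\chi_k>1\) as in the proof of \cref{lem:accept-small-radius}. This gives
\[
f(x_k)-f(x_{k+1})\ge\eta_1\pred_k\ge\eta_1c_{\rm dec}\min\Bigl\{\frac{4\varepsilon^2}{9\bar h_+},\tfrac23\varepsilon\,\Delta_{\min,\varepsilon}\Bigr\}\ge c'\varepsilon^2.
\]
Since \(f\ge f_\star\) on \(\mathcal R\), summing over \(\mathcal S\) yields \(|\mathcal S|\le (f(x_0)-f_\star)/(c'\varepsilon^2)\).

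\textbf{Step 3: counting unsuccessful iterations.} Each unsuccessful iteration multiplies \(\Delta\) by \(\gamma_{\rm dec}\), and each successful one multiplies it by at most \(\gamma_{\rm inc}\) (or by a bounded factor; we take \(\gamma_{\rm inc}\) as the maximal expansion). Criticality reductions only decrease \(\Delta\), which helps the bound. Because \(\Delta_k\ge\Delta_{\min,\varepsilon}\) throughout, taking logarithms gives
\[
|\mathcal U|\le \frac{\log\gamma_{\rm inc}}{|\log\gamma_{\rm dec}|}|\mathcal S|+\frac{\log(\Delta_0/\Delta_{\min,\varepsilon})}{|\log\gamma_{\rm dec}|}.
\]
The logarithmic term is \(\cO(\log\varepsilon^{-1})=\cO(\varepsilon^{-2})\). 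Hence \(N=|\mathcal S|+|\mathcal U|=\cO(\varepsilon^{-2})\). The constants depend on \(m_{\min},m_{\max}\) only through \(\kappa_f^y,\kappa_g^y,\bar h\), which are uniform by \cref{prop:FQ-transfer}.

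Because \(N\) is bounded, the first hitting time \(N_\varepsilon\) exists and obeys the same bound.

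\textbf{Step 4: evaluation count.} Each iteration costs one trial evaluation plus geometry repairs. The main obstacle is bounding the number of criticality-loop repetitions per iteration. Each pass through the loop shrinks \(\Delta_k\) by replacing it with \(\mu_{\rm crit}\chi_k\), and the argument in Step 1 shows \(\Delta_k\) stays above \(\tfrac23\mu_{\rm crit}\varepsilon\). We would therefore argue that the total number of repair events is at most proportional to \(N\) plus a logarithmic term. If necessary, we would treat each loop pass as a geometry-repair event covered by the hypothesis of at most \(C_{\rm geom}q_n\) evaluations, charging loop passes against the radius decrease as in Step 3.

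Multiplying the number of iterations by \(C_{\rm geom}q_n+1\) evaluations per iteration gives \(\cO(q_n\varepsilon^{-2})=\cO(n^2\varepsilon^{-2})\).
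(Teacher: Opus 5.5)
\textbf{Verdict: genuine gap (Step 1, criticality loop).} The main counting argument is sound and sounder than the paper's split, but the radius floor it depends on is not established once criticality reductions occur.

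\textbf{The gap: the radius after a criticality reduction.} \Cref{lem:stationarity-bridge} bounds from below the gradient of a model that is fully quadratic on the ball of the \emph{current} radius. The loop, however, sets $\Delta_k\leftarrow\mu_{\rm crit}\norm{g_{y,k}}$ using the gradient of the model fitted on the previous, larger ball.
\begin{itemize}
\item That older gradient is accurate only to order $\kappa_g^y(\Delta_k^{\rm old})^2$, so it can be arbitrarily small while $\chi_k^{\rm true}>\varepsilon$.
\item After the refit on the reduced ball (which \Cref{asmp:regularity} requires), the new gradient is close to $\chi_k^{\rm true}$. The exit test $\Delta_k\le\mu_{\rm crit}\chi_k$ is then satisfied, and the loop stops at a radius with no lower bound in terms of $\varepsilon$.
\item Neither the exit test nor \Cref{asmp:criticality} gives anything beyond the upper bound $\Delta_k\le\mu_{\rm crit}\chi_k$.
\end{itemize}
So your claim $\Delta_k\ge\frac23\mu_{\rm crit}\varepsilon$ after a reduction does not follow. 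With it goes $\Delta_{\min,\varepsilon}\ge c\,\varepsilon$, on which both Step 2 (decrease on every successful iteration) and the logarithmic term in Step 3 rest.

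To close this you need an additional safeguard. For instance, require that on exit from the loop $\Delta_k\ge\min\{\Delta_k^{\rm pre},\beta\chi_k\}$ for a fixed $\beta>0$, where $\Delta_k^{\rm pre}$ is the radius on entry. A rule of this kind is built into the criticality step of \cite{ConnScheinbergVicente2009}. Your argument then goes through with $\frac23\beta\varepsilon$ in place of $\frac23\mu_{\rm crit}\varepsilon$.

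\textbf{Comparison with the paper's route.} The paper's proof never meets this difficulty because its radius accounting silently ignores criticality reductions; the same extra hypothesis is tacitly needed there. Apart from this point, your decomposition differs from the paper's and is the more robust one.
\begin{itemize}
\item The paper credits $\varepsilon^2$-decrease only to successful iterations with $\Delta_k\ge\Delta_\varepsilon^{\rm vs}$. It then claims that small-radius (very successful) and unsuccessful iterations are bounded by radius accounting alone in terms of the large-radius successes.
\item That cannot work by itself. An interleaving of very successful steps below $\Delta_\varepsilon^{\rm vs}$ with unsuccessful steps above it can be as long as the objective decrease permits, while containing no large-radius success.
\item Your floor, $\Delta_k\ge\min\{\Delta_0,\gamma_{\rm dec}\Delta_\varepsilon^{\rm vs}\}$ in the absence of criticality reductions, is exactly what lets every success, small-radius ones included, be charged $\Omega(\varepsilon^2)$ against the objective.
\end{itemize}

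\textbf{Step 4 is a plan rather than an argument.} A pass $\Delta\leftarrow\mu_{\rm crit}\norm{g_{y,k}}$ need not shrink the radius by a fixed factor, so loop passes cannot be charged against $\log\Delta$ as in Step 3. You need one of two things:
\begin{itemize}
\item geometric reductions in the loop, which together with the floor can be amortized exactly as in Step 3; or
\item the reading the paper tacitly adopts when it writes $\cO(q_nN)$, namely that all repair work within one iteration, criticality loop included, costs at most $C_{\rm geom}q_n$ evaluations.
\end{itemize}
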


\begin{proof}
For every \(k<N\), the proof of \cref{lem:accept-small-radius} shows that
\(\chi_k\ge2\varepsilon/3\).  Consider a successful iteration with
\(\Delta_k\ge\Delta_\varepsilon^{\rm vs}\).  By the sufficient decrease condition,
\begin{equation}
    f(x_k)-f(x_{k+1})
    \ge \eta_1 c_{\rm dec}
    \min\left\{\frac{4\varepsilon^2}{9\bar h_+},
    \frac23\varepsilon\Delta_\varepsilon^{\rm vs}\right\}
    \ge c_s\varepsilon^2,
    \label{eq:large-success-decrease}
\end{equation}
where \(c_s>0\) is independent of \(\varepsilon\).  The last inequality uses
\(\Delta_\varepsilon^{\rm vs}\ge c_\Delta\varepsilon\) for a constant \(c_\Delta>0\),
which follows from \(0<\varepsilon<1\) because every term in
\(\Delta_\varepsilon^{\rm vs}\) is bounded below by a positive constant times
\(\varepsilon\).
Since \(f\) is bounded below, the number of such large-radius successful iterations is
\(\cO(\varepsilon^{-2})\).

By \cref{lem:accept-small-radius}, every iteration with
\(\Delta_k\le\Delta_\varepsilon^{\rm vs}\) is very successful and increases
the radius by at least \(\gamma_{\rm inc}\).  Rejected iterations shrink the
radius by \(\gamma_{\rm dec}\), and all radii remain bounded above by
\(\Delta_{\max}\).  A logarithmic radius-accounting argument then shows that
the number of unsuccessful iterations and small-radius successful iterations
is bounded by a constant multiple of the number of large-radius successful
iterations plus \(1+\abs{\log(\Delta_0/\Delta_\varepsilon^{\rm vs})}\).
Since this logarithmic term is
\(\cO(\log(1/\varepsilon))=\cO(\varepsilon^{-2})\) for
\(0<\varepsilon<1\), we obtain \(N=\cO(\varepsilon^{-2})\).

Each main iteration uses one trial evaluation.  Maintaining a fully quadratic model may
require geometry repair, and by assumption each repair event costs at most
\(C_{\rm geom}q_n\) new evaluations.  Hence the total evaluation count is
\(\cO(q_nN)=\cO(n^2\varepsilon^{-2})\).  Since no prefix longer than this bound can
satisfy \(\chi_k^{\rm true}>\varepsilon\) throughout, \(N_\varepsilon\) exists and obeys the
same bound.
\end{proof}

\begin{corollary}[Global first-order convergence]
\label{cor:global-conv}
Under the assumptions of \cref{thm:eval-complexity},
\begin{equation}
    \liminf_{k\to\infty}\chi_k^{\rm true}=0.
    \label{eq:global-conv}
\end{equation}
\end{corollary}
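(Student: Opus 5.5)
The plan is to argue by contradiction from \cref{thm:eval-complexity}, applied to tails of the iteration sequence. Suppose \(\liminf_{k\to\infty}\chi_k^{\rm true}>0\). Then there exist \(\varepsilon\in(0,1)\) and an index \(K\) such that \(\chi_k^{\rm true}>\varepsilon\) for all \(k\ge K\). Shrinking \(\varepsilon\) if necessary, we may also ensure \(\varepsilon<\min_{0\le k<K}\chi_k^{\rm true}\), provided none of these finitely many values is zero. If some \(\chi_k^{\rm true}=0\), that iterate is exactly stationary, and the liminf statement is interpreted through the infinitely-many-hits argument below.

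The cleaner route avoids this special case. First I will show that for every \(\varepsilon\in(0,1)\) and every starting index \(K\), some \(k\ge K\) satisfies \(\chi_k^{\rm true}\le\varepsilon\). To do this, view the iterations from \(K\) onward as a run of \cref{alg:abstract-matro} started at \(x_K\), with radius \(\Delta_K\) and metric \(M_K\). The hypotheses of \cref{thm:eval-complexity} are uniform in \(k\): the constants in \Cref{asmp:regularity,asmp:criticality,asmp:decrease-update} and in \Cref{asmp:metric-contract} do not depend on the starting iterate. Hence the theorem applies to the shifted sequence. Its bound \(N=\cO(\varepsilon^{-2})\) depends on the starting data only through \(f(x_K)-f_\star\le f(x_0)-f_\star\), since the algorithm is monotone, and through \(\abs{\log(\Delta_K/\Delta_\varepsilon^{\rm vs})}\). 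The latter is bounded because \(\Delta_K\le\Delta_{\max}\), and \(\Delta_K\) cannot fall below \(\Delta_\varepsilon^{\rm vs}\) by more than a factor \(\gamma_{\rm dec}\) while \(\chi^{\rm true}>\varepsilon\) persists. Alternatively, only finiteness is needed, not uniformity, and a finite bound for each \(K\) suffices.

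Consequently no infinite tail can have \(\chi_k^{\rm true}>\varepsilon\) throughout. So for each \(\varepsilon\), the set \(\{k:\chi_k^{\rm true}\le\varepsilon\}\) is infinite. Taking \(\varepsilon=1/j\) for \(j\to\infty\) gives a subsequence along which \(\chi_k^{\rm true}\to0\). Since \(\chi_k^{\rm true}\ge0\), this yields \eqref{eq:global-conv}.

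The main obstacle is justifying that \cref{thm:eval-complexity} can be restarted at an arbitrary index. The theorem is stated for prefixes starting at \(k=0\), so we must check that nothing in its proof uses \(M_0=I\) or the particular initial radius beyond finiteness of the log term. I would handle this by noting that the proof uses only telescoping of \(f\) over successful iterations, which holds on any index window, and the radius-accounting argument, which holds on any window with the initial radius replaced by \(\Delta_K\). Both remain finite, so the bound on the window length is finite, and that is all the contradiction requires.
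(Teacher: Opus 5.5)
Your proof is correct. It runs on the same engine as the paper's: the complexity bound forces an iterate with \(\chi_k^{\rm true}\le\varepsilon\), and then \(\varepsilon\downarrow0\). The bookkeeping differs, however, in a way that matters.

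The paper uses \cref{thm:eval-complexity} verbatim, for prefixes starting at \(k=0\). It takes the first hitting times \(N_{\varepsilon_j}\) for \(\varepsilon_j\downarrow0\) and extracts a subsequence. That tacitly requires \(N_{\varepsilon_j}\to\infty\), which holds exactly when \(\chi_k^{\rm true}>0\) for all \(k\). If some iterate is exactly stationary, the hitting times stay bounded and the argument says nothing about the tail.

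Your restart at an arbitrary \(K\) gives a hit in every window \([K,K+N_K(\varepsilon)]\). Hence there are infinitely many hits for each \(\varepsilon\), and \eqref{eq:global-conv} follows with no case distinction.

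The price is verifying that the proof of \cref{thm:eval-complexity} is shift-invariant, and you check the right things:
\begin{itemize}
\item the constants in the assumptions are uniform in \(k\);
\item nothing in the proof uses \(M_0=I\);
\item the telescoping starts from \(f(x_K)\le f(x_0)\), by monotone acceptance;
\item the logarithmic radius term needs only \(\Delta_K>0\).
\end{itemize}

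Two minor points. First, the contradiction set-up in your first paragraph becomes superfluous once you use the restart. Second, your aside that \(\Delta_K\) stays within a factor \(\gamma_{\rm dec}\) of \(\Delta_\varepsilon^{\rm vs}\) is not justified. The criticality step can reset the radius to \(\mu_{\rm crit}\chi_k\), and the history before \(K\) is unconstrained. As you note yourself, only finiteness of each window bound is needed, so this does not affect the argument.
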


\begin{proof}
Fix any \(\varepsilon\in(0,1)\).  \Cref{thm:eval-complexity} implies that an iteration
with \(\chi_k^{\rm true}\le\varepsilon\) occurs after finitely many iterations.  Applying
this to a sequence \(\varepsilon_j\downarrow0\) gives a subsequence along which
\(\chi_k^{\rm true}\to0\).
\end{proof}

\begin{remark}[Dependence of the implicit constant on problem data]
\label{rem:constant-dependence}
The constant hidden in the \(\cO(\cdot)\) notation of \cref{thm:eval-complexity}
depends on
\(f(x_0)-f_{\rm low}\) (the initial optimality gap),
\(\bar h_+\) (the uniform upper bound on Hessian norms),
\(\kappa_{\max}\) (the metric condition-number cap),
\(\eta_1,\gamma_{\rm dec},\gamma_{\rm inc}\) (the trust-region management parameters),
and \(\Delta_{\max}\).  In particular, it does \emph{not} depend on the specific
sequence of metrics \(\{M_k\}\), because the spectral contract
(\cref{thm:metric-contract-rule}) confines all metrics to the same compact set.
The dependence on \(\kappa_{\max}\) enters through the Cauchy decrease constant
\(c_{\rm dec}\) in \cref{prop:metric-cauchy} and vanishes when the metric is the
identity, recovering the standard trust-region bound.
\end{remark}

\section{Numerical Experiments}
\label{sec:numerics}

The experiments examine the metric construction from three complementary angles.
The Mor\'e--Wild benchmark gives a standard comparison with established DFO solvers.
A controlled anisotropy study tests the central mechanism more directly by rotating and
rescaling benchmark problems.  Two-dimensional trajectories then show how the computed
ellipsoids behave along individual runs.  Together these experiments are meant to
illustrate the cost--accuracy trade-off of building a full quadratic curvature metric, not
to replace the deterministic convergence guarantees proved above.

\subsection{Experimental protocol}
\label{subsec:protocol}
All experiments use the Mor\'e--Wild budget convention: one simplex gradient
corresponds to \(n+1\) function evaluations \cite{MoreWild2009}.  A run is declared
solved at tolerance \(\tau\) if it reaches the Mor\'e--Wild threshold
\begin{equation}
    f(x)-f_\star \le \tau\bigl(f(x_0)-f_\star\bigr)
    \label{eq:solve-criterion}
\end{equation}
within the evaluation budget, with the implementation using the standard
\(f_\star\)-scaled absolute fallback when the initial gap is nonpositive.  Unless
stated otherwise, the budget is \(500(n+1)\) evaluations.  The reported final
relative error is
\[
    \frac{\abs{f(x)-f_\star}}{\max\{\abs{f_\star},\abs{f(x_0)-f_\star},1\}} .
\]
The parallel experiment harness uses per-task wall-clock cutoffs to avoid hung
runs: 300 seconds in the standard Mor\'e--Wild benchmark and 60 seconds in the
controlled-anisotropy study.  A run that exhausts either the evaluation budget or
the applicable wall-clock cutoff before reaching tolerance \(\tau\) is recorded
as a failure at \(\tau\).  These conventions apply uniformly to all solvers.
\MATRO{} uses the spectral metric
construction described in Section~\ref{subsec:spectral-rule}, with
\begin{gather}
    \eta_1=0.1,\quad \eta_2=0.5,\quad
    \gamma_{\rm dec}=0.5,\quad \gamma_{\rm inc}=2.5,
    \label{eq:experiment-params}\\
    \sigma=10^{-8},\quad \kappa_{\max}=10^6,\quad \delta_M=1.0.
    \notag
\end{gather}
All solvers use the same starting points.  Random seeds are shared across randomized
transformations and stochastic solvers; deterministic solvers are run on the same
transformed instances.

\subsection{Standard benchmark behavior}
\label{subsec:more-wild}
The first experiment uses the Mor\'e--Wild benchmark collection: 22 problem families,
37 instances, and dimensions \(n\in\{2,\ldots,12\}\).  We compare \MATRO{} with
four baselines: NEWUOA, UOBYQA, Nelder--Mead \cite{NelderMead1965}, and CMA-ES
\cite{HansenMullerKoumoutsakos2003}.  UOBYQA is the closest spherical
full-quadratic baseline because it uses the same order of interpolation points as
\MATRO{}.  NEWUOA is the lower-cost minimum-change baseline, using only
\(2n+1\) interpolation points.  Nelder--Mead and CMA-ES are included as classical
non-trust-region baselines.

\begin{table}[t]
\centering
\caption{Solve rates (\%) on the Mor\'e--Wild benchmark over 37 instances and five seeds.}
\label{tab:mw-solve-rates}
\begin{tabular}{lrrrr}
\toprule
Solver & \(\tau=10^{-1}\) & \(\tau=10^{-3}\) & \(\tau=10^{-5}\) & \(\tau=10^{-7}\) \\
\midrule
\MATRO{} & 86.5 & 83.8 & 83.8 & 75.7 \\
NEWUOA & 64.9 & 59.5 & 51.4 & 48.6 \\
UOBYQA & 62.2 & 62.2 & 51.4 & 48.6 \\
Nelder--Mead & 78.4 & 78.4 & 78.4 & 64.9 \\
CMA-ES & 86.5 & 86.5 & 76.2 & 66.5 \\
\bottomrule
\end{tabular}
\end{table}

\Cref{tab:mw-solve-rates} shows that \MATRO{} achieves the highest solve rates at
moderate-to-tight tolerances.  At \(\tau=10^{-1}\), \MATRO{} and CMA-ES both
reach 86.5\%, ahead of the classical trust-region solvers.  At
\(\tau=10^{-5}\), \MATRO{} leads with 83.8\%, followed by Nelder--Mead (78.4\%)
and CMA-ES (76.2\%).  At \(\tau=10^{-7}\), \MATRO{} solves 75.7\% of the runs,
compared with 66.5\% for CMA-ES and 64.9\% for Nelder--Mead.  NEWUOA and UOBYQA
show substantial timeout rates on the larger instances, so their solve rates should
be interpreted together with the wall-clock cutoff.  This pattern is consistent
with the intended role of the metric: curvature information becomes more informative
once the interpolation model is accurate enough to capture the local geometry.

\begin{figure}[t]
\centering
\begin{subfigure}[t]{0.48\textwidth}
\centering
\includegraphics[width=\textwidth]{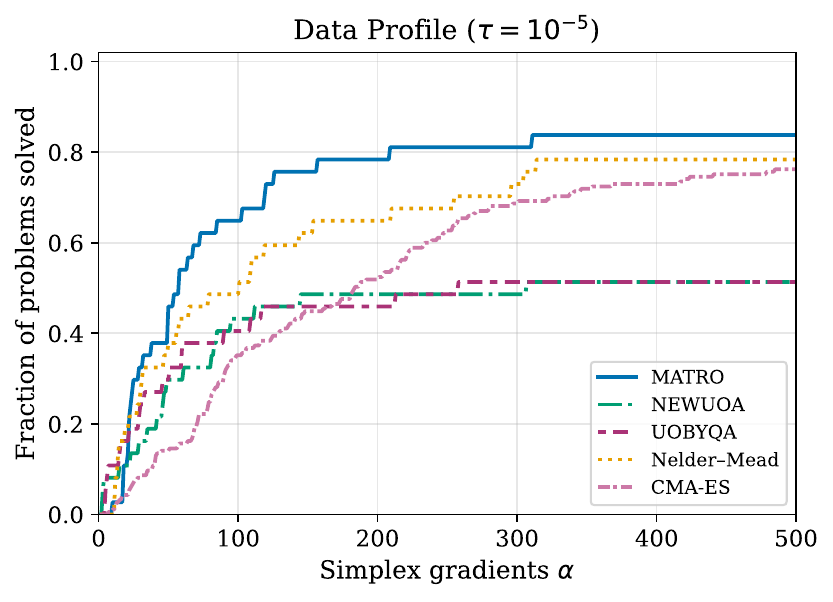}
\caption{\(\tau=10^{-5}\).}
\label{fig:mw-data-1e5}
\end{subfigure}\hfill
\begin{subfigure}[t]{0.48\textwidth}
\centering
\includegraphics[width=\textwidth]{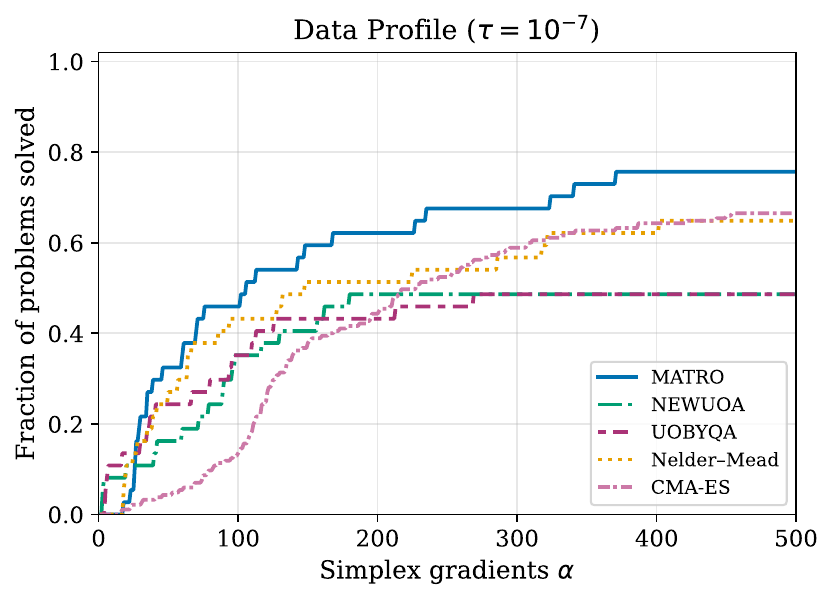}
\caption{\(\tau=10^{-7}\).}
\label{fig:mw-data-1e7}
\end{subfigure}
\caption{Data profiles on the Mor\'e--Wild benchmark.}
\label{fig:mw-data-profiles}
\end{figure}

\begin{figure}[t]
\centering
\begin{subfigure}[t]{0.48\textwidth}
\centering
\includegraphics[width=\textwidth]{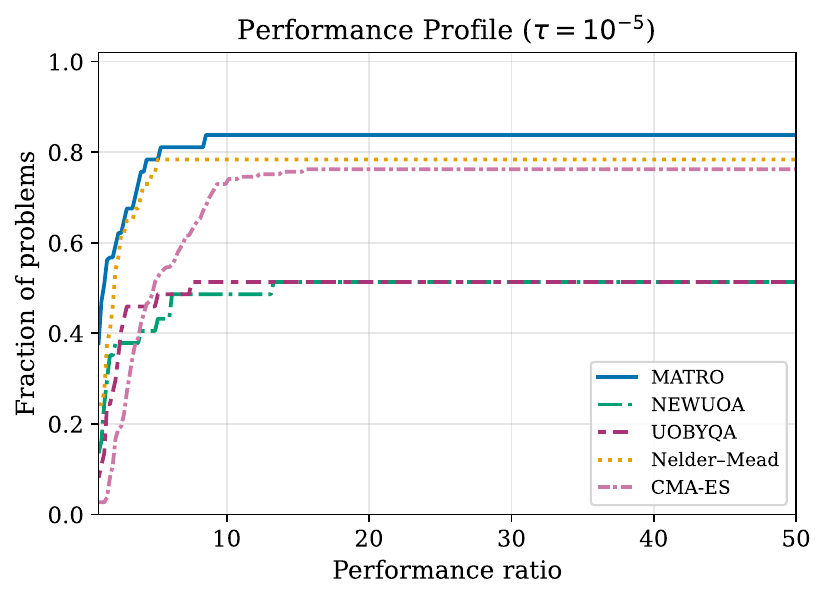}
\caption{\(\tau=10^{-5}\).}
\label{fig:mw-perf-1e5}
\end{subfigure}\hfill
\begin{subfigure}[t]{0.48\textwidth}
\centering
\includegraphics[width=\textwidth]{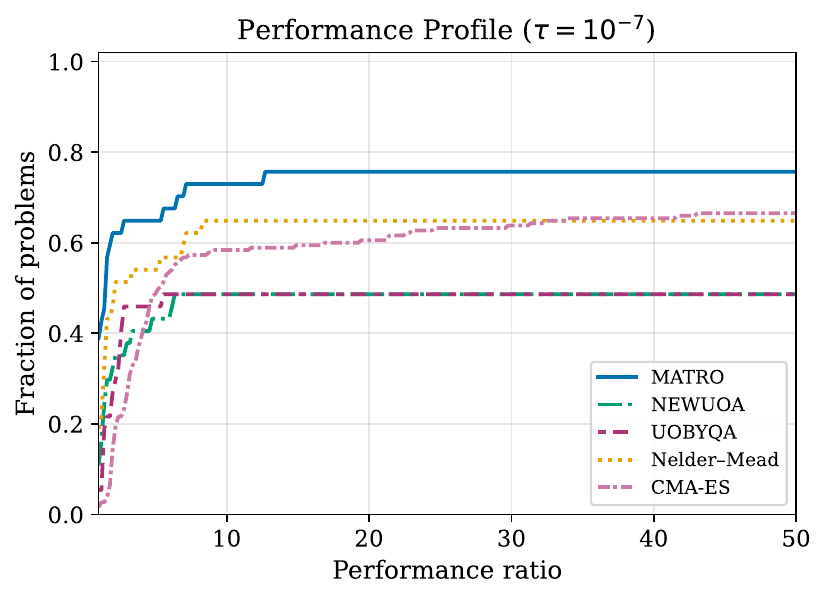}
\caption{\(\tau=10^{-7}\).}
\label{fig:mw-perf-1e7}
\end{subfigure}
\caption{Dolan--Mor\'e performance profiles on the Mor\'e--Wild benchmark.}
\label{fig:mw-perf-profiles}
\end{figure}

The data profiles in \cref{fig:mw-data-profiles} and the Dolan--Mor{\'e}
performance profiles in \cref{fig:mw-perf-profiles} \cite{DolanMore2002} give the
same trend at the profile level.  At small budgets, CMA-ES and Nelder--Mead
begin moving without first constructing a full quadratic interpolation model.  As
the budget and required accuracy increase, the full quadratic metric has more
opportunity to use curvature information, and \MATRO{} attains a larger solved
fraction.  The performance profiles also show that the relative ranking is
problem dependent, as expected for a method whose advantage depends on the
presence of stable local anisotropy.

The per-problem convergence curves support the aggregate profiles.  On classical
problems such as Rosenbrock and Powell, \MATRO{} converges comparably or faster than
the spherical baselines.  Its advantage is most visible on instances where NEWUOA and
UOBYQA exhaust the budget before reaching moderate accuracy, such as Watson
\((n=9)\), Chebyquad \((n=9)\), Osborne~1, and Osborne~2.

\begin{figure}[t]
\centering
\includegraphics[width=0.56\textwidth]{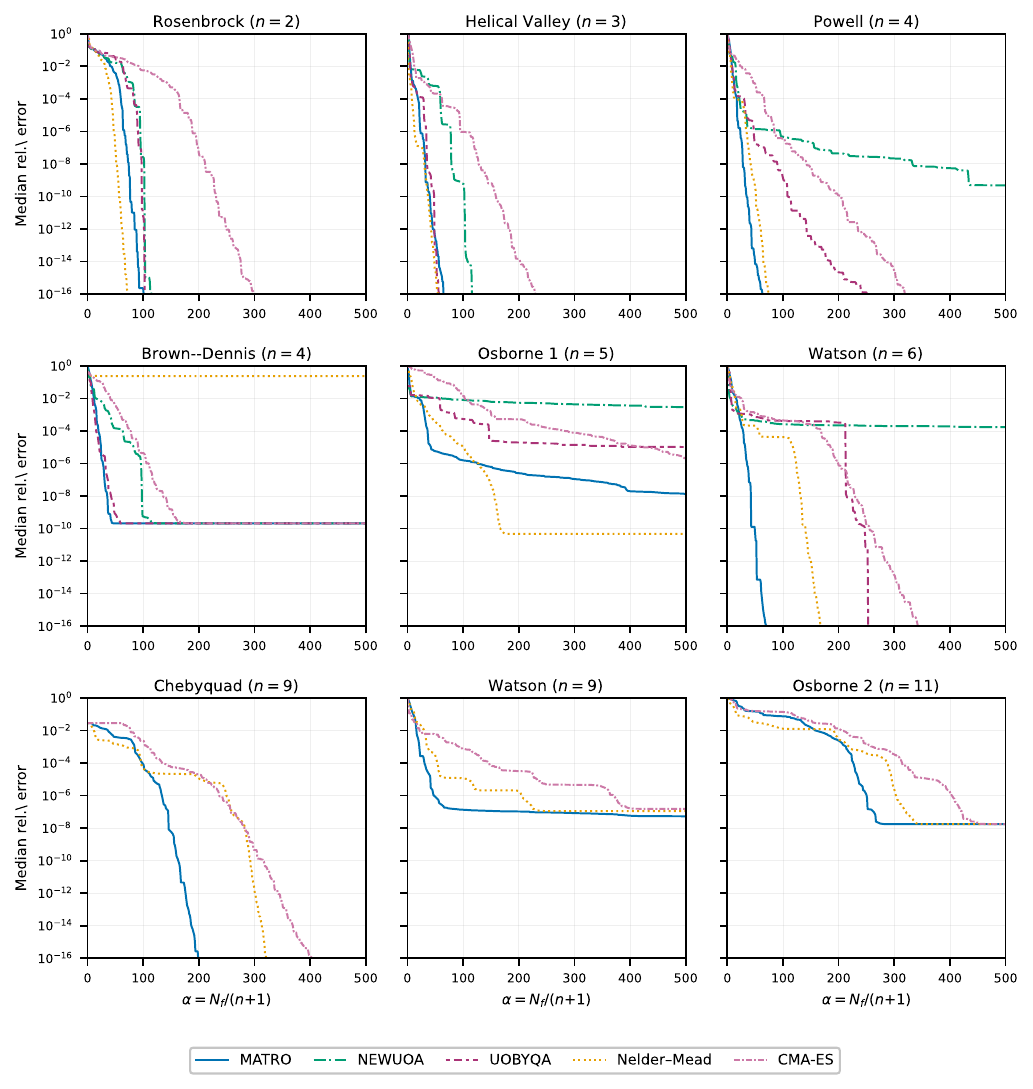}
\caption{Per-problem convergence on nine representative Mor\'e--Wild instances
(median over five seeds).}
\label{fig:conv-selected}
\end{figure}

\Cref{fig:conv-selected} illustrates these regimes; the full set of per-problem
curves is available with the plotting scripts in the repository.

\subsection{Response to controlled anisotropy}
\label{subsec:kappa-test}
The second experiment isolates the effect of anisotropy more directly.  We transform
each base problem by
\begin{equation}
    g(x)=f(Ax+b),
    \qquad
    A=Q\diag(\sigma_1,\ldots,\sigma_n)Q^\top,
    \qquad
    \cond(A)=\kappa,
    \label{eq:anisotropy-transform}
\end{equation}
where \(Q\) is a random orthogonal matrix and the singular values \(\sigma_i\) are
geometrically spaced between \(1\) and \(\kappa\).  At \(\kappa=1\), all \(\sigma_i\) are
equal and the transformation is orthogonal up to scaling.  The base problems are
Rosenbrock \((n=5)\), Powell \((n=4)\), Ellipsoid \((n=5)\), Wood \((n=4)\), and
Chebyquad \((n=6)\), each with
\(\kappa\in\{1,10,20,30,\ldots,100\}\).  For each \((\text{problem},\kappa)\)
setting, five random rotations and five solver seeds give 25 independent runs.
We compare \MATRO{} with NEWUOA.

\begin{figure}[t]
\centering
\includegraphics[width=0.92\textwidth]{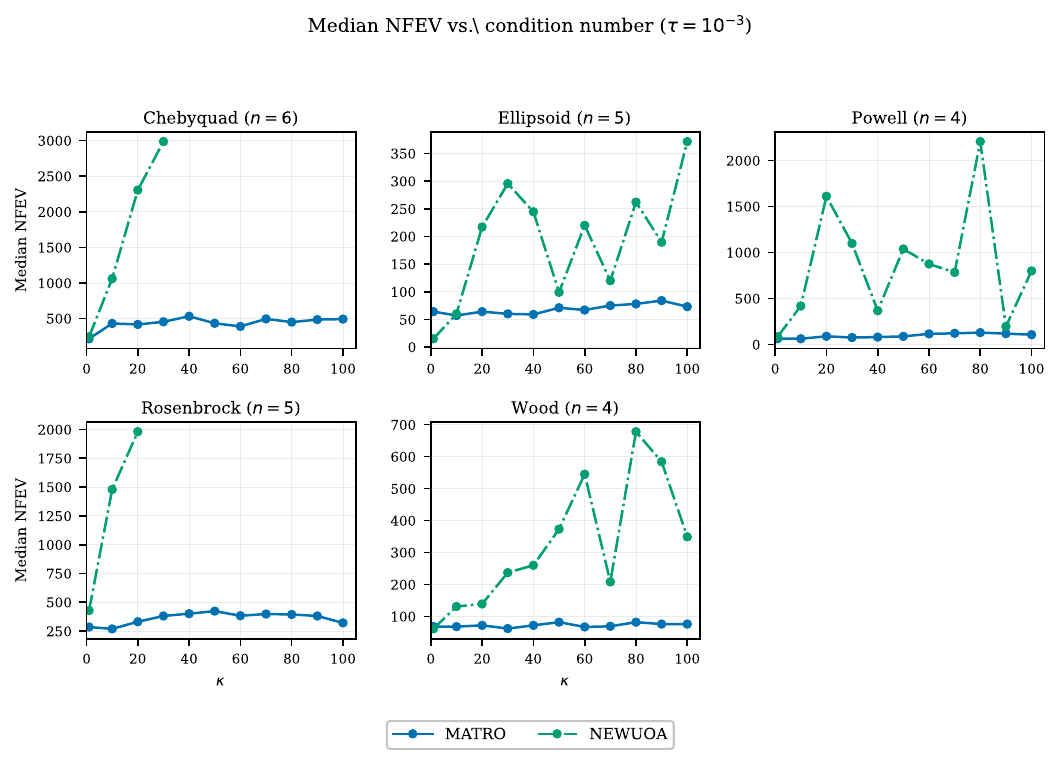}
\caption{Median NFEV versus condition number \(\kappa\) at \(\tau=10^{-3}\).
Each panel shows one base problem; medians are taken over solved runs among
25 independent instances per \(\kappa\).  A missing marker indicates that
no run in that group was solved.}
\label{fig:kappa-nfev}
\end{figure}
The median NFEV values are computed over solved runs only and should therefore
be interpreted together with the solve-rate table.

\begin{table}[t]
\centering
\caption{Solve rates (\%) on the controlled-anisotropy study at
\(\tau=10^{-3}\), over 25 runs per setting.  Here M denotes \MATRO{} and
N denotes NEWUOA; a zero entry means that no run was solved.}
\label{tab:kappa-solve-rates}
\small
\begin{tabular}{@{}r rr rr rr rr rr@{}}
\toprule
& \multicolumn{2}{c}{Chebyquad} & \multicolumn{2}{c}{Ellipsoid}
& \multicolumn{2}{c}{Powell} & \multicolumn{2}{c}{Rosenbrock}
& \multicolumn{2}{c}{Wood} \\
\cmidrule(lr){2-3}\cmidrule(lr){4-5}\cmidrule(lr){6-7}
\cmidrule(lr){8-9}\cmidrule(lr){10-11}
\(\kappa\) & M & N & M & N & M & N & M & N & M & N \\
\midrule
  1  & 100 & 100 & 100 & 100 & 100 & 100 & 100 &  40 & 100 & 100 \\
 10  & 100 & 100 & 100 &  92 & 100 & 100 & 100 &  60 & 100 & 100 \\
 20  & 100 &  60 & 100 &  96 & 100 & 100 & 100 &  60 & 100 & 100 \\
 30  & 100 &  20 & 100 &  88 & 100 &  80 & 100 &   0 & 100 & 100 \\
 40  & 100 &   0 & 100 &  88 & 100 &  40 & 100 &   0 & 100 & 100 \\
 50  & 100 &   0 & 100 &  68 & 100 &  40 & 100 &   0 & 100 & 100 \\
 60  & 100 &   0 & 100 &  80 & 100 &  60 & 100 &   0 & 100 & 100 \\
 70  & 100 &   0 & 100 &  76 & 100 &  40 & 100 &   0 & 100 & 100 \\
 80  & 100 &   0 & 100 &  68 & 100 &  60 & 100 &   0 & 100 & 100 \\
 90  & 100 &   0 & 100 &  64 & 100 &  20 & 100 &   0 & 100 & 100 \\
100  & 100 &   0 & 100 &  72 & 100 &  40 & 100 &   0 & 100 &  80 \\
\bottomrule
\end{tabular}
\end{table}

\Cref{fig:kappa-nfev,tab:kappa-solve-rates} show how increasing anisotropy
changes the relative behavior of the two solvers.  At \(\kappa=1\), both solvers
succeed on most problems; NEWUOA is faster on the well-conditioned Ellipsoid instance
because it uses a smaller interpolation set.  As \(\kappa\) grows, NEWUOA's solve
rate drops sharply on Chebyquad (reaching 0\% at \(\kappa\ge40\)), Rosenbrock
(0\% at \(\kappa\ge30\)), and Powell (below 50\% at \(\kappa\ge40\)), whereas
\MATRO{} maintains 100\% on all five problems across the entire range.  On
Ellipsoid, NEWUOA retains a partial solve rate but its median NFEV grows
more rapidly than \MATRO{}'s.  The \MATRO{} NFEV curves are nearly
flat: the metric absorbs the conditioning increase into the trust-region
shape, so that the subproblem in induced variables sees a much smaller
effective condition number.  The tighter-tolerance tables and raw data
are available in the accompanying repository.

\subsection{Trajectory-level geometry}
\label{subsec:two-d}
The third experiment visualizes the mechanism on two-dimensional problems.  The goal
is qualitative: the plots show how the metric changes the trust-region shape along a
trajectory and how this geometry relates to the observed error curves.

\begin{figure}[!t]
\centering
\begin{subfigure}[t]{0.39\textwidth}
\includegraphics[width=\textwidth]{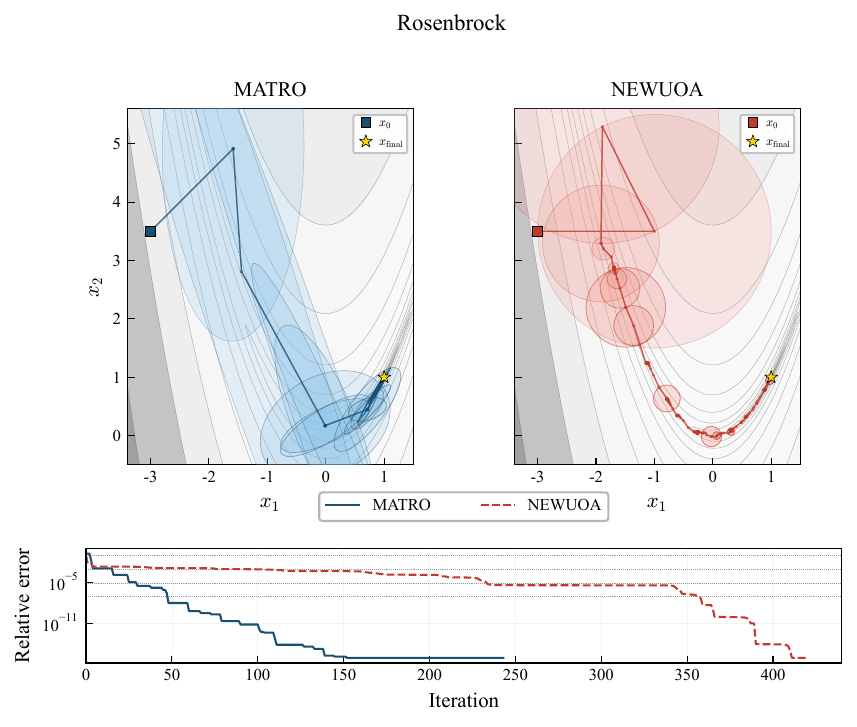}
\caption{Rosenbrock.}
\end{subfigure}\hfill
\begin{subfigure}[t]{0.39\textwidth}
\includegraphics[width=\textwidth]{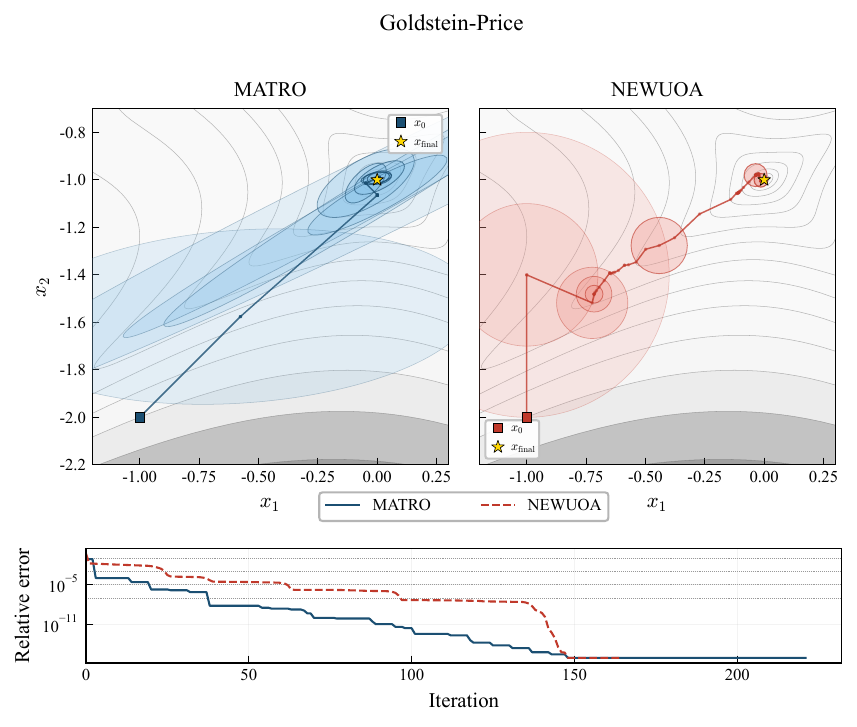}
\caption{Goldstein--Price.}
\end{subfigure}
\begin{subfigure}[t]{0.39\textwidth}
\includegraphics[width=\textwidth]{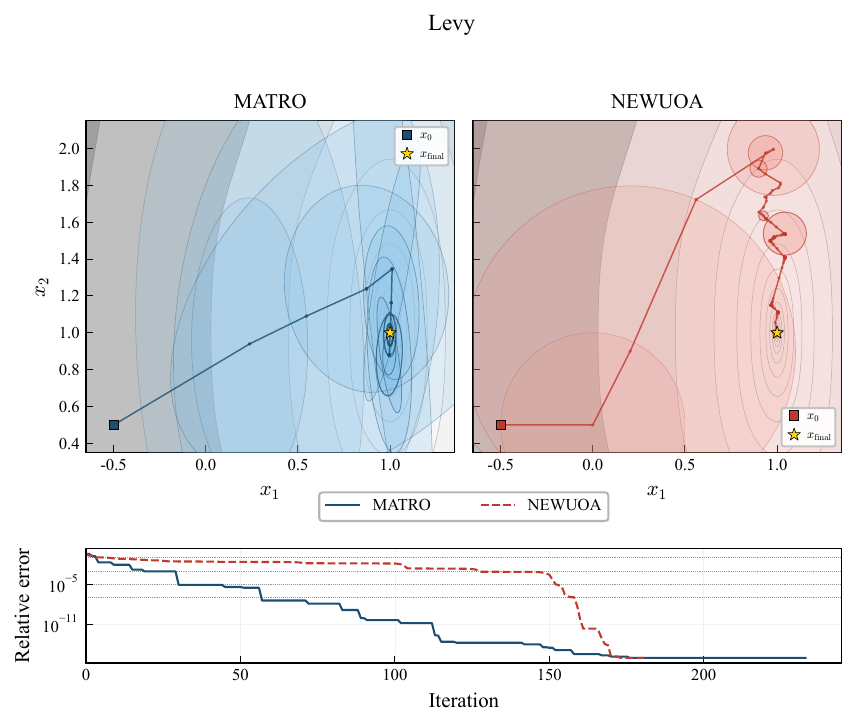}
\caption{L\'evy.}
\end{subfigure}\hfill
\begin{subfigure}[t]{0.39\textwidth}
\includegraphics[width=\textwidth]{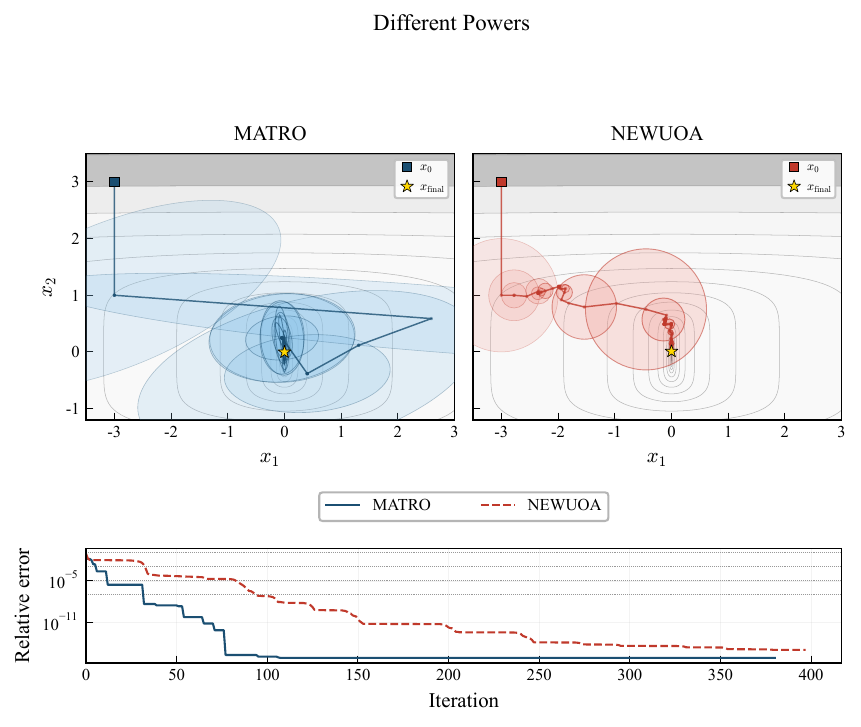}
\caption{Different Powers.}
\end{subfigure}
\caption{Two-dimensional trajectories.  \MATRO{} uses ellipsoidal regions and
NEWUOA uses circular regions.  The examples show valley alignment, metric
rotation, a nearly isotropic case, and changing curvature ratios.}
\label{fig:traj-pairs}
\end{figure}

\Cref{fig:traj-pairs} illustrates the mechanism at the trajectory level.  The
JSON data used to render these plots are included in the accompanying repository.

\subsection{Cost and observed limitations}
\label{subsec:overhead}
The cost of the curvature metric is concentrated in dense linear algebra.  Each \MATRO{}
iteration performs an \(\cO(n^3)\) eigendecomposition of \(H_{x,k}\) and stores an
\(\cO(n^2)\) metric matrix.  This cost is modest when function evaluations dominate,
but it can be visible on analytic benchmark functions where evaluations are cheap.  The
initialization cost is also larger than for NEWUOA: \MATRO{} requires
\(q_n=(n+1)(n+2)/2\) interpolation points before the first full quadratic model is
available.

The observed limitations are consistent with the role assigned to the metric.  If the
interpolation Hessian is inaccurate, the metric may align with numerical or sampling
artifacts.  If many eigenvalues are controlled by the spectral floor or cap, the metric may
underuse genuine curvature.  In strongly nonconvex regions, a positive definite
magnitude-based shape may not be the best search geometry.  These cases mark the
boundary between reliable local curvature information and curvature information that is
not yet accurate enough to define the trust-region shape.

\section{Conclusion}
\label{sec:conclusion}

This paper developed \MATRO{}, a fully quadratic derivative-free trust-region method
in which the trust region is an ellipsoid and the metric is selected from interpolation
curvature.  The analysis first treats the metric abstractly: after the induced-coordinate
map, the ellipsoidal subproblem is an ordinary Euclidean trust-region subproblem, and
classical poisedness and fully quadratic theory apply in those coordinates.  This shows
that changing the trust-region geometry need not change the deterministic
trust-region globalization mechanism, provided the metrics satisfy a uniform spectral
contract.

The metric-selection analysis then explains why a Hessian-derived metric is an appropriate
choice.  For positive definite quadratics, the determinant-normalized curvature metric
is the unique volume-normalized metric that makes the induced Hessian isotropic.  The
associated metric trust-region step is a truncated Newton step, and exact recovery occurs
once the Newton step is feasible.  For indefinite fitted Hessians, the absolute-curvature
metric balances the magnitudes of curvature directions while preserving their signs in
the induced model.  Fully quadratic Hessian accuracy gives a corresponding bound on
the induced conditioning of the true local Hessian.

Under the metric contract and the standard fully quadratic assumptions, the method
retains the usual first-order evaluation-complexity order
\(\cO(n^2\varepsilon^{-2})\) for full quadratic interpolation.  The numerical experiments
support the intended geometry-dependent interpretation.  The ellipsoidal region is most
helpful when the interpolation Hessian captures a stable anisotropic shape, especially in
rotated or poorly scaled local geometries.  Its cost is most visible at loose tolerances and
in problems where function evaluations are cheap relative to dense linear algebra.

Several directions remain open.  The most immediate is to reduce the cost of metric
selection through diagonal, block-diagonal, low-rank, or lazy-update variants.  A second
is to design more robust curvature-shape proxies in noisy or strongly nonconvex regimes.
A third is to combine metric selection with underdetermined or regression-based
quadratic models, so that the benefits of ellipsoidal trust regions are not tied to the full
\(\cO(n^2)\) interpolation cost.

Source code and scripts for the experiments are available at
\url{https://github.com/huwei0121/DFOETR}.



\paragraph{Reproducibility.}
The complete raw data, plotting scripts, per-problem convergence curves, the
controlled-anisotropy tables at the tighter tolerance, and the two-dimensional
trajectory JSON files are included in the accompanying repository.  The main
algorithmic parameters used to generate the figures are those listed in
\eqref{eq:experiment-params}; the full implementation configuration is recorded
with the experiment outputs.


\begin{thebibliography}{99}

\bibitem{AudetDennis2006MADS}
C.~Audet and J.~E. Dennis, Jr.,
\newblock Mesh adaptive direct search algorithms for constrained optimization,
\newblock {SIAM J. Optim.}, 17 (2006), pp.~188--217.

\bibitem{ConnGouldToint2000}
A.~R. Conn, N.~I.~M. Gould, and P.~L. Toint,
\newblock \emph{Trust-Region Methods},
\newblock MPS--SIAM Ser. Optim., SIAM, Philadelphia, PA, 2000.

\bibitem{ConnScheinbergVicente2008a}
A.~R. Conn, K.~Scheinberg, and L.~N. Vicente,
\newblock Geometry of interpolation sets in derivative free optimization,
\newblock {Math. Program.}, 111 (2008), pp.~141--172.

\bibitem{ConnScheinbergVicente2008b}
A.~R. Conn, K.~Scheinberg, and L.~N. Vicente,
\newblock Geometry of sample sets in derivative-free optimization: polynomial regression and underdetermined interpolation,
\newblock {IMA J. Numer. Anal.}, 28 (2008), pp.~721--748.

\bibitem{ConnScheinbergVicente2009}
A.~R. Conn, K.~Scheinberg, and L.~N. Vicente,
\newblock \emph{Introduction to Derivative-Free Optimization},
\newblock MPS--SIAM Ser. Optim., SIAM, Philadelphia, PA, 2009.

\bibitem{DolanMore2002}
E.~D. Dolan and J.~J. Mor\'e,
\newblock Benchmarking optimization software with performance profiles,
\newblock {Math. Program.}, 91 (2002), pp.~201--213.

\bibitem{HansenMullerKoumoutsakos2003}
N.~Hansen, S.~D. M\"uller, and P.~Koumoutsakos,
\newblock Reducing the time complexity of the derandomized evolution strategy with covariance matrix adaptation (CMA-ES),
\newblock {Evol. Comput.}, 11 (2003), pp.~1--18.

\bibitem{LarsonMenickellyWild2019}
J.~Larson, M.~Menickelly, and S.~M. Wild,
\newblock Derivative-free optimization methods,
\newblock {Acta Numer.}, 28 (2019), pp.~287--404.

\bibitem{MoreWild2009}
J.~J. Mor\'e and S.~M. Wild,
\newblock Benchmarking derivative-free optimization algorithms,
\newblock {SIAM J. Optim.}, 20 (2009), pp.~172--191.

\bibitem{NelderMead1965}
J.~A. Nelder and R.~Mead,
\newblock A simplex method for function minimization,
\newblock {Comput. J.}, 7 (1965), pp.~308--313.

\bibitem{Powell2002UOBYQA}
M.~J.~D. Powell,
\newblock {UOBYQA}: Unconstrained optimization by quadratic approximation,
\newblock {Math. Program.}, 92 (2002), pp.~555--582.

\bibitem{Powell2006NEWUOA}
M.~J.~D. Powell,
\newblock The {NEWUOA} software for unconstrained optimization without derivatives,
\newblock in \emph{Large-Scale Nonlinear Optimization}, G.~Di Pillo and M.~Roma, eds.,
Nonconvex Optim. Appl. 83, Springer, New York, 2006, pp.~255--297.

\bibitem{Powell2009BOBYQA}
M.~J.~D. Powell,
\newblock The {BOBYQA} algorithm for bound constrained optimization without derivatives,
\newblock Tech. Rep. DAMTP 2009/NA06, Department of Applied Mathematics and Theoretical Physics,
University of Cambridge, Cambridge, UK, 2009.

\bibitem{WildShoemaker2011}
S.~M. Wild and C.~A. Shoemaker,
\newblock Global convergence of radial basis function trust-region derivative-free algorithms,
\newblock {SIAM J. Optim.}, 21 (2011), pp.~761--781.

\bibitem{XieYuan2025SIAM}
P.~Xie and Y. Yuan,
\newblock A derivative-free method using a new underdetermined quadratic interpolation model,
\newblock {SIAM J. Optim.}, 35 (2025), pp.~1110--1133.

\bibitem{XieYuan2026H2}
P.~Xie and Y. Yuan,
\newblock Least \(H^2\)-norm updating of quadratic interpolation models for derivative-free trust-region algorithms,
\newblock {IMA J. Numer. Anal.}, 46 (2026), pp.~21--50.


\bibitem{LiZhouXieLi2025}
L. Li, Y. Zhou, P. Xie, and H. Li,
``A spectral Levenberg--Marquardt--deflation method for multiple solutions of semilinear elliptic systems,''
\emph{Journal of Computational and Applied Mathematics},
vol. 475, article 116998, 2025.

\bibitem{YeLiXieYu2025}
Y. Ye, L. Li, P. Xie, and H. Yu,
``An improved adaptive orthogonal basis deflation method for multiple solutions with applications to nonlinear elliptic equations in varying domains,''
\emph{Journal of Computational Mathematics},
vol. 44, no. 3, pp. 794--818, 2025.

\bibitem{XieYuan2023}
P. Xie and Y.-x. Yuan,
``A derivative-free optimization algorithm combining line-search and trust-region techniques,''
\emph{Chinese Annals of Mathematics, Series B},
vol. 44, no. 5, pp. 693--708, 2023.


\bibitem{XieWild2026ReMU}
P.~Xie and S.~M. Wild,
\newblock {ReMU}: regional minimal updating for model-based derivative-free optimization,
\newblock {Optim. Methods Softw.}, published online, 2026.

\bibitem{XieYuan2025DFOTO}
P.~Xie and Y. Yuan,
\newblock Derivative-free optimization with transformed objective functions and the algorithm based on the least Frobenius norm updating quadratic model,
\newblock {J. Oper. Res. Soc. China}, 13 (2025), pp.~327--363.

\bibitem{XieYuan2026MoSub}
P.~Xie and Y. Yuan,
\newblock A new two-dimensional model-based subspace method for large-scale unconstrained derivative-free optimization: 2D-MoSub,
\newblock {Optim. Methods Softw.}, 41 (2026), pp.~118--150.

\bibitem{Zhang2023PRIMA}
Z.~Zhang,
\newblock {PRIMA}: Reference implementation for Powell's methods with modernization and amelioration,
\newblock preprint, arXiv:2307.12962, 2023.

\end{thebibliography}
\end{document}